\definecolor{refkey}{gray}{.5}   
\definecolor{labelkey}{gray}{.5} 
\theoremstyle{plain}
\newtheorem{conjecture}{Conjecture}
\newtheorem{theorem}{Theorem}
\newtheorem{claim}{Claim}
\newtheorem{proposition}{Proposition}
\newtheorem{corollary}{Corollary}
\theoremstyle{definition}
\newtheorem{remark}[theorem]{Remark}
\newcommand{\const}{\text{const}}
\numberwithin{equation}{section}
\newcommand{\R}{\mathbb{R}}
\renewcommand{\phi}{\varphi}
\newcommand{\authorfootnotes}{\renewcommand\thefootnote{*}}%
\DeclareMathOperator{\sgn}{sgn}
\begin{document}

\input epsf.sty

\title{Hausdorff dimension of a class of three-interval exchange maps}

\maketitle{}

\begin{center}
\authorfootnotes
 D. Karagulyan\footnote{Department of Mathematics, Royal Institute of Technology, S-100 44 Stockholm, Sweden. Email: davitk@kth.se} \par \bigskip
\end{center}

\begin{abstract}
In \cite{B} Bourgain proves that Sarnak's disjointness conjecture holds for a certain class of Three-interval exchange maps. In the present paper we slightly improve the Diophantine condition of Bourgain and estimate the constants in the proof. We further show, that the new parameter set has positive, but not full Hausdorff dimension. This, in particular, implies that the Lebesgue measure of this set is zero. 

\end{abstract}

\section{Introduction}\label{intro}

Let $\mu$ denote the M{\"o}bius function, i.e.
$$
\mu(n) = \begin{cases} 
(-1)^k & \mbox{ if } n=p_1 p_2 \cdots p_k \mbox{ for distinct primes } p_k, \\
0& \mbox{ otherwise}.\\
\end{cases}
$$
In \cite{Sa1}, \cite{Sar} Sarnak introduced the following conjecture. Recall
that a topological dynamical system $(Y,T)$ is a compact metric
space $Y$ with a homeomorphism $T : Y \rightarrow Y$ , and the topological entropy
$h(Y,T)$ of such a system is defined as
$$
h(Y,T)=\lim_{\epsilon\rightarrow 0}\limsup_{n\rightarrow \infty}\frac{1}{n}\log N(\epsilon,n),
$$
where $N(\epsilon, n)$ is the largest number of $\epsilon$-separated points in $Y$ using the metric $d_n:Y\times Y \rightarrow \R^+$ defined by 
$$
d_n(x,y)=\max_{0\leq i\leq n}d(T^ix,T^iy).
$$
A sequence $f:\mathbb{Z} \rightarrow \mathbb{C}$ is said to be deterministic if it is of the form
$$
f(n)=F(T^nx),
$$
for all $n$ and some topological dynamical system $(Y,T)$ with zero topological entropy $h(Y,T)=0$, a base  point $x \in Y$, and a continuous function $F:Y \rightarrow \mathbb{C}$.
\begin{conjecture}[Sarnak]\label{con1}
Let $f:\mathbb{N}\rightarrow \mathbb{C}$ be a deterministic  sequence. Then 
\begin{equation} \label{f0}
S_n(T(x),f) = \frac{1}{n} \sum_{k=1}^n \mu(k)f(k)=o(1),
\end{equation}
as $n \rightarrow \infty$.
\end{conjecture}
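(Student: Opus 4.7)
The plan is to apply the Bourgain--Sarnak--Ziegler orthogonality criterion, a refinement of an earlier observation of Daboussi and K\'atai. The criterion says that a bounded sequence $f : \mathbb{N} \to \mathbb{C}$ satisfies $\sum_{n \leq N} \mu(n) f(n) = o(N)$ provided the multiplicative correlations
$$
\frac{1}{N} \sum_{n=1}^N f(pn)\,\overline{f(qn)} \longrightarrow 0
$$
hold for every pair of distinct primes $p \neq q$. Since $f(n) = F(T^n x)$ for a continuous $F$ on the zero-entropy system $(Y,T)$, these correlations are Birkhoff averages of $F \otimes \overline{F}$ along the orbit of $(x,x)$ under $T^p \times T^q$ on $Y \times Y$. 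Consequently the problem is reduced to a purely ergodic-theoretic disjointness question about the self-joinings of $(Y,T)$ along distinct prime powers of the time parameter.

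Next I would exploit the zero topological entropy assumption via the variational principle: every $T$-invariant Borel probability measure on $Y$, and hence every weak-$*$ limit joining obtained from the orbit of $(x,x)$ on $Y\times Y$, has zero measure-theoretic entropy. Using the Furstenberg--Zimmer structure theorem for zero-entropy systems, I would try to decompose $(Y,T)$ into a tower of isometric extensions over an equicontinuous base and verify the required correlation estimate on each layer. The analytic input changes with the level of the tower: Weyl-type equidistribution for rotations and affine skew products, the Green--Tao--Ziegler quantitative equidistribution theorem on nilmanifolds for nilsystems, and finer harmonic-analytic techniques (including the Fourier-Diophantine arguments pursued in the present paper) for the more delicate non-algebraic distal extensions such as interval exchange maps.

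The hard part --- and the reason this statement remains open --- is making the reduction \emph{uniform} across all zero-entropy systems. Every structurally distinct class treated so far (nilsequences, rank-one transformations, horocycle flows, various substitution systems, interval exchanges with suitable Diophantine properties) has demanded its own dedicated estimate, and at present there is no mechanism converting zero entropy alone into the correlation bound needed by the BSZ criterion. The contribution of the present paper is to add one more class to the list by weakening Bourgain's Diophantine hypothesis on three-interval exchange parameters and computing the Hausdorff dimension of the resulting parameter set; a genuine proof of the conjecture in the generality stated above would most likely require a new multiplicative-to-dynamical principle that bypasses the case analysis forced by the Furstenberg--Zimmer hierarchy, and finding such a principle is the principal obstacle.
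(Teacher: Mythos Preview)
The statement you were asked to prove is Sarnak's \emph{conjecture}, and the paper does not contain a proof of it; it is stated as an open problem, and the paper's contribution is to verify the conjecture only for a restricted class of three-interval exchange maps satisfying a Diophantine condition. So there is no ``paper's own proof'' against which to compare your attempt.

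You have in fact recognized this yourself: your proposal is not a proof but a research outline, and you explicitly say the statement ``remains open.'' That assessment is correct. Your sketch of the Bourgain--Sarnak--Ziegler criterion and the structural obstructions is accurate as a summary of the landscape, but it does not constitute a proof, nor could it, since no proof is currently known. In particular, the step where you would ``verify the required correlation estimate on each layer'' of the Furstenberg--Zimmer tower is exactly the point where every known attack breaks down in general: zero entropy alone does not force the $T^p \times T^q$ joinings to be controllable, and the structure theorem for distal systems does not reduce the problem to nilsystems. So the ``genuine gap'' here is simply that the conjecture is open, and your proposal correctly identifies where the gap lies rather than closing it.
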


The conjecture, also known as the M{\"o}bius orthogonality or M{\"o}bius disjointness conjecture, is known to be true for several dynamical systems.
Note, that in the simplest case, when $f \equiv \const$, the conjecture is equivalent to the statement
$$
\frac{1}{N}\sum_{n=1}^{N}\mu(n)=o(1),
$$
which, in fact, is equivalent to the Prime Number Theorem.
The orthogonality of the M{\"o}bius function to any sequence arising from a rotation
dynamical system ($X$ is the circle $\mathbb{T}$ and $T(x) = x + \alpha$, $\alpha \in \mathbb{T}$) follows from the
following inequality of Davenport(\cite{davenport})
$$
\max_{\theta \in T} \left| \sum_{k \leq x}\mu(k)e^{ik\theta}\right|\leq C_A\frac{x}{\log^Ax },
$$
for any $A>0$. However this result predates Sarnak's conjecture and the methods used in the proof are number-theoretical. When
$(X, T)$ is a translation on a compact nilmanifold it is proved in \cite{G-T}. In \cite{B-S-Z} it is established also for the discrete horocycle flows. For orientation preserving circle-homeomorphisms and continuous interval maps of zero entropy the conjectures is proved in \cite{D}. The conjecture has also been proved to hold in several other cases (\cite{A-K-L-1},\cite{A-L},\cite{AKL}).
Another natural class of dynamical systems are the interval exchange maps. In \cite{B} Bourgain, using the dynamical description of trajectories in (\cite{F-H-Z-1}--\cite{F-H-Z-3}) and the Hardy-Littlewood circle method, showed that under a certain diophantine condition Conjecture \ref{con1} holds for a certain class of three-interval exchange maps. In this paper we slightly improve the diophantine condition of Bourgain and estimate the Hausdorff dimension of the new parameter set (Theorem \ref{mainth}).
We want to note, that using the criterion of Bourgain in  \cite{B} and the generalization of the self-dual induction defined in \cite{F}, for each primitive permutation, Ferenczi and Mauduit(\cite{F-M}) construct a large family of $k$-interval exchanges
satisfying Sarnak’s conjecture.

In \cite{A-Ch} Eskin and Chaika proved the M{\"o}bius orthogonality for three interval exchange maps satisfying a certain mild diophantine condition. Even though their result holds for almost all three interval exchange maps, the diophantine condition considered in their paper is essentially complementary to the one considered here. In \cite{A-Ch} the continued fractions are required to have certain bound from above, while in Bourgains method they need to be uniformly large. We note that Eskin and Chaika, in fact, give two proofs of the fact that the M{\"o}bius orthogonality holds almost surely for three-interval exchange maps, however the second proof does not provide an explicit Diophintine condition. Their proof is based on the Katai \cite{Kat} and Bourgain-Sarnak-Ziegler \cite{B-S-Z} criterion, while Bourgain uses a direct approach.

The present paper is a part of the author’s Ph.D. thesis.

\section{Three interval exchange maps}\label{sec:2}

The three-interval exchange transformation $T$ with probability vector $(\alpha,\beta,1 - (\alpha + \beta))$, $ 0 < \alpha < 1$, $0 < \beta < 1 - \alpha$, and the permutation $(3,2,1)$ is defined by 
$$
T_{\alpha,\beta}(x)=\begin{cases} 
x + 1 - \alpha   & \mbox{ if } x\in [0,\alpha), \\
x + 1 - 2\alpha -\beta  & \mbox{ if } x\in [\alpha,\alpha + \beta), \\
x - \alpha -\beta  & \mbox{ if } x\in [\alpha + \beta,1). \\
\end{cases}
$$
$T$ depends only on the two parameters $(\alpha, \beta)$. We
note that $T$ is continuous except at the points $\alpha$ and $\alpha + \beta$.

In order to present the main result of the paper we need to recall some facts and definitions from  \cite{F-H-Z-1} and \cite{B}.
Set
\begin{equation}\label{func}
A(\alpha, \beta) = \frac{1 - \alpha}{1 + \beta}
\end{equation}
and 
$$
B(\alpha, \beta) = \frac{1}{1 + \beta}.
$$
$T$ is obtained from the $2$-interval exchange map $R$ on $[0,1)$ given by (\cite{K},\cite{K-S})
\begin{equation}\label{ind}
R(x)=\begin{cases} 
x + A(\alpha,\beta),   & \mbox{ if } x\in [0,\alpha), \\
x + A(\alpha, \beta)-1  & \mbox{ if } x\in [1-A(\alpha,\beta),1), \\
\end{cases}
\end{equation}
by inducing (according to the first return map) on the subinterval $[0, B(\alpha,\beta)]$ and then renormalizing 
by scaling by $1+\beta$.
We say $T$ satisfies the infinite distinct orbit condition (or i.d.o.c. for short) of Keane \cite{K} if the two negative trajectories $\{T^{-n}(\alpha)\}_{n\geq 0}$ and $\{T^{-n}(\alpha + \beta)\}_{n\geq 0}$ of the discontinuities are infinite disjoint sets. Under this hypothesis, $T$ is both minimal and uniquely ergodic; the unique invariant probability measure is the Lebesgue measure $\mu$ on $[0,1)$ (and hence $(X,T,\mu)$ is an ergodic system).

Let $I$ denote the open interval $(0,1)$, $D_0 \subset \mathbb{R}^2$, the simplex bounded by the lines $y = 0$, $x = 0$, and $x + y=1$, and $D$ the triangular region bounded by the lines $x = \frac{1}{2}$, $x + y = 1$, and $2x + y = 1$. Note that
$$
D_0=\{(\alpha,\beta): 0 < \alpha < 1, 0 < \beta < 1 - \alpha\}.
$$
We define two mappings on $I\times I$
$$
F(x,y) =\left(\frac{2x-1}{x},
\frac{y}{x}\right) \hbox{ and }  G(x,y) = (1-x-y,y). 
$$
According to \cite{F-H-Z-1}, if $(\alpha,\beta) \in D_0$ is not in $D$ and is not on any of the rational lines $p\alpha + q\beta = p-q$, $p\alpha+q\beta = p-q +1$, $p\alpha+q\beta = p-q-1$, then there exists a unique finite sequence of integers $l_0, l_1, ..., l_k$ such that $(\alpha,\beta)$ is in $H^{-1}D$, where $H$ is a composition of the form $G^t \circ F^{l_0} \circ G \circ F^{l_1} \circ G \dots \circ G \circ F^{l_k} \circ G^s, s,t \in \{0,1\}$.
Let 
$$
\mathcal{H}=\{ G^t \circ F^{l_0} \circ G \circ F^{l_1} \circ G \dots \circ G \circ F^{l_k} \circ G^s: s,t \in \{0,1\} \hbox{ and } l_0, l_1, \dots, l_k \in \mathbb{N} \}.
$$
Clearly $\mathcal{H}$ is a countable set.

The function $H(\alpha,\beta)$ is computed recursively as follows: we start with $\alpha^{(0)} = \alpha$, $\beta^{(0)} = \beta$. Then, given $(\alpha^{(k)},\beta^{(k)})$, we have three mutually exclusive possibilities: if $(\alpha^{(k)},\beta^{(k)})$ is in $D$, the algorithm stops; if $\alpha^{(k)} < \frac{1}{2}$, we apply $G$; if $2\alpha^{(k)} + \beta^{(k)} < 1$, we apply $F$.  

Associated to each point $(\alpha,\beta) \in D_0$, there is a sequence $(n_k,m_k,\varepsilon_{k+1})_{k\geq 1}$, where $n_k$ and $m_k$ are positive integers, and $\varepsilon_{k+1}=\pm1$. This sequence is called the three-interval expansion of $(\alpha,\beta)$; it is constructed as follows:
\begin{itemize}
  \item For $(\alpha,\beta)$ in $D$ let
  $$
  x_0 =
\frac{1-\alpha-\beta}{1-\alpha} \hbox{ and }
y_0 = \frac{1-2\alpha}{1-\alpha}, 
$$
and define for $k \geq 0$ 
$$
(x_{k+1},y_{k+1})=\begin{cases} 
\left(\left\{\frac{y_k}{(x_k + y_k) - 1},\frac{x_k}{(x_k + y_k) - 1}\right\}\right) & \mbox{ if } x_k+y_k > 1 \\
\left(\left\{\frac{1 - y_k}{1 -(x_k + y_k)},\frac{1 - x_k}{1 - (x_k + y_k)}\right\}\right) & \mbox{ if } x_k+y_k < 1.\\
\end{cases}
$$
$$
(n_{k+1}, m_{k+1})=\begin{cases} 
\left(  \left[  \frac{y_k}{(x_k + y_k) - 1} \right] ,\left[ \frac{x_k}{(x_k + y_k) - 1}\right]\right)   & \mbox{ if } x_k+y_k > 1 \\
\left( \left[ \frac{1 - y_k}{1 -(x_k + y_k)} \right]  ,\left[ \frac{1 - x_k}{1 - (x_k + y_k)}\right] \right)  & \mbox{ if } x_k+y_k < 1,\\
\end{cases}
$$
where $\{a\}$ and $[a]$ denote the fractional and integer part of $a$ respectively. For $k \geq 0$ set 
$$
\epsilon_{k+1} = \sgn(x_k + y_k - 1). 
$$
We note that $\epsilon_1$ is always $-1$, hence we ignore it in the expansion.
  \item For $(\alpha,\beta)\notin D$ we let $H$ be the function above for which $(\alpha,\beta)\in H^{-1}D$ and put
  $$
  (\bar{\alpha},\bar{\beta})=H(\alpha,\beta),
  $$
and define $(n_k,m_k,\epsilon_{k+1})$ as in the previous case, starting from $(\bar{\alpha},\bar{\beta})\in D$.
\end{itemize}

In \cite{F-H-Z-1} the authors also prove the following propositions and theorem:

\begin{proposition}[{\cite[Proposition 2.1, (2)]{F-H-Z-1}}]\label{propos1}
An infinite sequence $(n_k, m_k, \epsilon_{k+1})$ is the expansion of at least one pair $(\alpha, \beta)$ defining a transformation $T$
satisfying the i.d.o.c. condition, if and only if $n_k$ and $m_k$ are positive integers, $\epsilon_{k+1} = \pm1$, $(n_k, \epsilon_{k+1}) \neq (1, +1)$ and $(m_k, \epsilon_{k+1}) \neq (1, +1)$ for infinitely many values of $k$.

\end{proposition}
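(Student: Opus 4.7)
The plan is a symbolic-dynamics argument organized around the expansion map $\Phi$ on $D$ defined by the two branches in the formula for $(x_{k+1},y_{k+1})$. Each admissible triple $(n,m,\epsilon)$ labels a cylinder $C_{(n,m,\epsilon)}\subset D$, namely the set of $(x_0,y_0)$ whose first expansion symbol equals $(n,m,\epsilon)$, and the three-interval expansion of $(\bar\alpha,\bar\beta)\in D$ is just the $\Phi$-itinerary of $(x_0,y_0)$. In this language, the ``only if'' direction asserts that, under i.d.o.c., the itinerary is well-defined, infinite, and avoids the symbols with $(n,\epsilon)=(1,+1)$ or $(m,\epsilon)=(1,+1)$ for infinitely many $k$; the ``if'' direction asserts that any sequence satisfying the stated conditions labels a non-empty nested intersection of cylinders whose unique limit point, after inverting a matching $H\in\mathcal{H}$, gives $(\alpha,\beta)\in D_0$ satisfying i.d.o.c.

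For sufficiency I would first write the inverse branches $\Psi_{(n,m,\epsilon)}:D\to C_{(n,m,\epsilon)}$ explicitly by solving the recursion for $(x_k,y_k)$ in terms of $(x_{k+1},y_{k+1})$; these are projective maps sending $D$ into a sub-parallelogram. Each $\Psi_{(n,m,\epsilon)}$ is a strict contraction except when $(n,\epsilon)=(1,+1)$ or $(m,\epsilon)=(1,+1)$, in which case the inverse branch has a parabolic fixed point on $\partial D$. Consequently
\[
K_k:=\Psi_{(n_1,m_1,\epsilon_2)}\circ\cdots\circ\Psi_{(n_k,m_k,\epsilon_{k+1})}(D)
\]
is a decreasing family of non-empty compact parallelograms, and the hypothesis that the bad symbols are avoided infinitely often inserts a strict contraction into infinitely many factors of the composition, forcing $\mathrm{diam}(K_k)\to 0$. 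The resulting unique point $(\bar\alpha,\bar\beta)\in D$, pulled back by the $H\in\mathcal{H}$ matching the prefix of the sequence, yields the required $(\alpha,\beta)$; that $T_{\alpha,\beta}$ satisfies i.d.o.c. follows from interpreting the two components $(n_k)$ and $(m_k)$ as codings of the two negative discontinuity orbits, which stay disjoint precisely because of the infinitely-often condition.

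For necessity, assume $T_{\alpha,\beta}$ satisfies i.d.o.c. Keane's condition rules out $(\alpha,\beta)$ lying on any of the exceptional rational lines listed before the definition of $\mathcal{H}$, which is exactly the locus on which $\Phi$ halts or is ambiguous, so the expansion is infinite. Suppose for contradiction that $(n_k,\epsilon_{k+1})=(1,+1)$ for all $k\geq k_0$ (the symmetric case handles $m_k$). A direct computation identifies the parabolic fixed point of the corresponding inverse branch and shows that orbits asymptotic to it correspond to $(\alpha,\beta)$ satisfying a rational linear relation of the excluded type, so the two negative discontinuity orbits eventually meet, contradicting i.d.o.c.

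The main obstacle will be the contraction estimate in the sufficiency direction when good and bad symbols are interspersed: the good branches must provide enough contraction to overwhelm the neutral behaviour of the bad ones. I expect this to reduce to an explicit lower bound on a projective-metric expansion of $\Phi$ away from a neighbourhood of the parabolic fixed points, combined with the observation that each bad branch maps $D$ strictly inside $D$, so that one good branch composed after a run of bad branches already produces definite contraction on $K_k$. The remaining ingredients---existence of $H\in\mathcal{H}$ for points in $D_0\setminus D$, injectivity of the coding on the i.d.o.c. locus, and the boundary description of the cylinders $C_{(n,m,\epsilon)}$---are routine given the explicit formulas from \cite{F-H-Z-1}.
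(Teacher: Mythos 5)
First, a point of reference: the paper does not prove this proposition. It is quoted verbatim from Ferenczi--Holton--Zamboni \cite{F-H-Z-1} (their Proposition 2.1(2)) and used as a black box, so there is no proof here to compare yours against. In \cite{F-H-Z-1} the argument runs through the arithmetic of the induced rotation: the expansion is tied to the continued fraction of $A(\alpha,\beta)=(1-\alpha)/(1+\beta)$ (cf.\ Proposition \ref{propos} of this paper) together with the position of the second discontinuity relative to that rotation, and i.d.o.c.\ is translated into irrationality and non-degeneracy conditions on these quantities; the excluded symbols $(1,+1)$ are exactly those on which the algorithm stalls.

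Your cylinder/inverse-branch framework is a legitimate way to organize the statement, and the existence half (a nested intersection of non-empty compact cylinders, with diameters forced to zero once infinitely many uniformly contracting symbols occur) is standard. But the proposal has a genuine gap at the step that carries all the content: the equivalence between the symbolic non-degeneracy condition and i.d.o.c. You assert that the limit point satisfies i.d.o.c.\ ``from interpreting $(n_k)$ and $(m_k)$ as codings of the two negative discontinuity orbits, which stay disjoint precisely because of the infinitely-often condition,'' and, in the necessity direction, that an eventually-constant bad symbol ``shows the two negative discontinuity orbits eventually meet.'' Neither implication is established; both are restatements of the proposition. Closing them requires the dictionary between the expansion and the $T^{-1}$-orbits of $\alpha$ and $\alpha+\beta$: concretely, that i.d.o.c.\ fails if and only if $(\alpha,\beta)$ satisfies a rational relation of the excluded type or $A(\alpha,\beta)$ is rational, and that the algorithm produces the degenerate symbol forever exactly in that case --- which is precisely the arithmetic work carried out in \cite{F-H-Z-1}. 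Two smaller issues: the claim that the inverse branches $\Psi_{(n,m,\epsilon)}$ are strict contractions exactly when $(n,\epsilon)\neq(1,+1)$ and $(m,\epsilon)\neq(1,+1)$ must be checked against the explicit fractional-linear formulas (uniformity over the infinite alphabet is not automatic, since $n,m$ are unbounded and the cylinders accumulate on $\partial D$); and for the ``at least one pair'' direction you must also verify that the point in the intersection actually has the prescribed itinerary, i.e.\ never lands on the boundary set where the algorithm is ambiguous or halts.
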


\begin{proposition}[{\cite[Proposition 2.1, (4)]{F-H-Z-1}}]\label{propos}
For $(\alpha,\beta)\in D_0$, let $(\bar{\alpha},\bar{\beta})=H(\alpha,\beta)$ as above, then
$$
A(\bar{\alpha},\bar{\beta})= \frac{1 - \bar{\alpha}}{1 + \bar{\beta}} = \cfrac{1}{2 + \cfrac{1}{m_1  + n_1 - \cfrac{\epsilon_2}{m_2 + n_2 - \cfrac{\epsilon_3}{m_3 + n_3 - \ddots\,}}}}.
$$
\end{proposition}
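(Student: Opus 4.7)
The plan is to reduce the identity to a one-step algebraic recursion on the sequence $t_k := (x_k+y_k)-1$ and then iterate it to any depth. First, for $(\bar\alpha,\bar\beta) \in D$, I would invert the change of variables defining $(x_0, y_0)$: solving $y_0 = (1-2\bar\alpha)/(1-\bar\alpha)$ for $\bar\alpha$ gives $1-\bar\alpha = 1/(2-y_0)$, and then $\bar\beta = (1-\bar\alpha)(1-x_0)$ yields $1+\bar\beta = (3-x_0-y_0)/(2-y_0)$. Hence
\[
A(\bar\alpha,\bar\beta) = \frac{1-\bar\alpha}{1+\bar\beta} = \frac{1}{3-(x_0+y_0)} = \frac{1}{2-t_0}.
\]
Since $(\bar\alpha,\bar\beta) \in D$ forces $2\bar\alpha+\bar\beta>1$, one has $t_0<0$, consistent with the stated convention $\epsilon_1=-1$. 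This produces the outermost ``$2$'' in the continued fraction and reduces the task to expanding $-t_0$.

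Next I would derive a single recursive identity for $t_k$ that covers both branches of the definition. Adding the two relations $y_k/t_k = n_{k+1}+x_{k+1}$ and $x_k/t_k = m_{k+1}+y_{k+1}$ when $t_k > 0$, and the analogous relations with $1-y_k$, $1-x_k$, and $-t_k$ when $t_k < 0$, collapses the dependence on $x_k,y_k$ to just $x_k+y_k$. Using $\epsilon_{k+1} = \sgn(t_k)$, both cases combine into
\[
\frac{\epsilon_{k+1}}{t_k} = (n_{k+1}+m_{k+1}) + t_{k+1},
\]
that is, $t_k = \epsilon_{k+1}/\bigl(n_{k+1}+m_{k+1}+t_{k+1}\bigr)$. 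Substituting this first for $t_0$ in $A = 1/(2-t_0)$ and then recursively for each $t_{k+1}$ generates the truncations of the claimed continued fraction up to any finite depth.

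Finally, I would verify that the infinite continued fraction converges to $A(\bar\alpha,\bar\beta)$. Because $|t_k|<1$ for every $k$, $n_k+m_k \geq 2$ always, and by Proposition~\ref{propos1} one has $n_k+m_k \geq 3$ for infinitely many $k$ under the i.d.o.c.\ hypothesis, the standard estimate on the tails of a continued fraction with denominators bounded below by $2$ shows that the truncation error at depth $k$ is dominated by a product that shrinks to $0$ as $k \to \infty$.

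The main obstacle I anticipate is the sign bookkeeping: aligning the $\pm\epsilon$ that naturally drops out of the recursion with the specific form $m_k+n_k-\epsilon_{k+1}/(\cdots)$ displayed in the statement requires care, because each branch change (where $t_k$ flips sign) potentially alters how $\epsilon_{k+1}$ propagates through the iterated substitution. Once this is tracked systematically, for instance by carrying $\epsilon_{k+1}$ as a running factor collected at each level, the rest of the argument is routine algebra combined with the elementary convergence estimate above.
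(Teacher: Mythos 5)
The paper offers no proof of this proposition---it is quoted from \cite{F-H-Z-1}---so your argument has to stand on its own. Its two structural steps are correct: inverting $(x_0,y_0)$ does give $A(\bar\alpha,\bar\beta)=1/(2-t_0)$ with $t_0=(1-2\bar\alpha-\bar\beta)/(1-\bar\alpha)<0$ on $D$, and summing the two defining relations in each branch does collapse to the single recursion $\epsilon_{k+1}/t_k=n_{k+1}+m_{k+1}+t_{k+1}$ (in the $x_k+y_k<1$ branch the constants cancel because $2-(x_k+y_k)=1-t_k$ and $x_{k+1}+y_{k+1}=1+t_{k+1}$). The convergence step is also essentially fine, but invoke the right tool: every partial numerator is $\pm1$ and every partial denominator is $m_k+n_k\geq 2$, so the \'Sleszy\'nski--Pringsheim criterion $|b_k|\geq|a_k|+1$ gives convergence and keeps every tail in the closed unit disk, which is exactly what identifies the limit with $A(\bar\alpha,\bar\beta)$; Proposition~\ref{propos1} is not needed for this.

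The genuine gap is the one you deferred. Iterating your recursion yields
\begin{equation*}
A(\bar\alpha,\bar\beta)=\cfrac{1}{2+\cfrac{1}{m_1+n_1+\cfrac{\epsilon_2}{m_2+n_2+\cfrac{\epsilon_3}{m_3+n_3+\ddots}}}},
\end{equation*}
with $+\epsilon_{k+1}$, whereas the statement has $-\epsilon_{k+1}$; under the paper's definition $\epsilon_{k+1}=\sgn(x_k+y_k-1)$ these are different numbers, not two bookkeepings of the same one. Test case: $x_0=y_0=(\sqrt{3}-1)/2$ is a fixed point of the $x+y<1$ branch, so $\bar\alpha=\bar\beta=(6-\sqrt{3})/11$, $m_k=n_k=2$ and $\epsilon_{k+1}=-1$ for all $k$; the true value is $A=1/(2-t_0)=1/(4-\sqrt{3})\approx 0.44093$, your formula gives $1/(2+1/(2+\sqrt{3}))=1/(4-\sqrt{3})$, while the displayed formula evaluates to $1/(2+1/(2+\sqrt{5}))=1/\sqrt{5}\approx 0.44721$. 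So your derivation is the correct one, and the sign in the quoted statement is consistent only with the opposite convention $\epsilon_{k+1}=\sgn(1-x_k-y_k)$ (the author in effect discards this sign immediately after \eqref{b:2}, so nothing downstream is affected). You need to carry the bookkeeping to the end and say which convention your identity proves; as written, the proof does not terminate in the displayed formula.
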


\medskip
We define the natural partition
$$
P_1=[0,\alpha),
$$
$$
P_2=[\alpha, \alpha+\beta),
$$
$$
P_3=[\alpha+\beta,1).
$$
For every point $x \in [0,1)$, we define an infinite sequence $(x_n)_{n\in \mathbb{N}}$ by putting $x_n=i$ if $T^nx \in P_i$, $i=1,2,3$. The sequence $(x_1,x_2,\dots)$ is called the trajectory of $x$. If $T$ satisfies the i.d.o.c. condition (see \cite{K}), the minimality of the system implies that all trajectories contain the same finite words as factors. 

Let $I'$ be a set of the form $\cap_{i=0}^{n-1}T^{-i}P_k$; we say $I'$ has a name  of length $n$ given by $k_0,\dots, k_{n-1}$; note that $I'$ is necessarily an interval and $k_0,\dots, k_{n-1}$ is the common  beginning of trajectories of all points in $I'$.

For each interval $J$, there exists a partition $J_i$, $1 \leq i \leq t$, of $J$ into
subintervals (with $t = 3$ or $t = 4$), and $t$ integers $h_i$, such that $T^{h_i}J_i \subset J$, and $\{T^jJ_i\}$, $1\leq i \leq t$, $0\leq j \leq h_i - 1$, is a partition of $[0,1)$ into intervals: this is the partition into Rokhlin stacks associated to $T$ with respect to $J$. The intervals $J_i$ have names of length $h_i$ and are called return words to $J$.
 
We have the following theorem.

\begin{theorem}[{\cite[Theorem 2.2]{F-H-Z-1}}]\label{Ft}
Let $T$ satisfies the i.d.o.c. condition, and let 
$$
(n_k,m_k,\epsilon_{k+1})_{k\geq1},
$$
be the three-interval expansion of $(\alpha,\beta)$. Then there exists an infinite sequence of nested intervals $J_k$, $k \geq 1$, which have exactly three return words, $A_k$, $B_k$ and $C_k$, given recursively for $k \geq 1$ by the following formulas 
\begin{equation}\label{form1}
A_k = A^{n_k-1}_{k-1} C_{k-1}B^{m_k-1}_{k-1} A_{k-1},
\end{equation}
\begin{equation}\label{form2}
B_k = A^{n_k-1}_{k-1} C_{k-1}B^{m_k}_{k-1},
\end{equation}
\begin{equation}\label{form3}
C_k = A^{n_k-1}_{k-1} C_{k-1}B^{m_k-1}_{k-1},
\end{equation}
if $\epsilon_{k+1}=+1$, and 
\begin{equation}\label{form4}
A_k = A^{n_k-1}_{k-1} C_{k-1}B^{m_k}_{k-1},
\end{equation}
\begin{equation}\label{form5}
B_k = A^{n_k-1}_{k-1} C_{k-1}B^{m_k-1}_{k-1} A_{k-1},
\end{equation}
\begin{equation}\label{form6}
C_k = A^{n_k-1}_{k-1} C_{k-1}B^{m_k}_{k-1}A_{k-1},
\end{equation}
if $\epsilon_{k+1}=-1$.
The initial words $A_0$,$B_0$,$C_0$ satisfy $||A_0|-|B_0||= 1$ and they are simple combinations of the symbols $1,2$ and $3$ (see Proposition 2.3, \cite{F-H-Z-1}).
\end{theorem}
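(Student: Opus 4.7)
The plan is to prove the theorem by induction on $k$, maintaining throughout a Rokhlin tower representation of $[0,1)$ with base $J_k$ split into three columns whose names are the return words $A_k, B_k, C_k$. The base case is provided by Proposition 2.3 of \cite{F-H-Z-1}, cited in the statement: one exhibits an initial base on which the first-return map of $T$ has exactly three orbit types, yielding short initial words $A_0, B_0, C_0$ in the alphabet $\{1,2,3\}$ that automatically satisfy $||A_0| - |B_0|| = 1$ by direct inspection.

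For the inductive step, assume the three return words $A_{k-1}, B_{k-1}, C_{k-1}$ are known on $J_{k-1}$ and consider the induced map $T_{J_{k-1}}$. Under the i.d.o.c. hypothesis (Proposition \ref{propos1}), $T_{J_{k-1}}$ is itself a three-interval exchange on the three column-bases, with two discontinuities inside $J_{k-1}$ inherited from the $T$-orbits of $\alpha$ and $\alpha+\beta$. The next interval $J_k \subset J_{k-1}$ is the sub-interval cut out by these discontinuities in the manner prescribed by one step of the three-interval expansion algorithm of Section \ref{sec:2}: the sign $\epsilon_{k+1} = \sgn(x_k + y_k - 1)$ determines on which side the new base is taken, and the integer parts $n_k, m_k$ count the passages through the $A_{k-1}$- and $B_{k-1}$-columns that a point of $J_k$ performs before first returning to $J_k$.

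Once $J_k$ is chosen, the new return words are read off as concatenations of previous return words along the corresponding paths through the tower. In the case $\epsilon_{k+1} = +1$, tracking the itinerary of a point in the $A_k$-column through the old tower yields exactly $A_{k-1}^{n_k-1} C_{k-1} B_{k-1}^{m_k-1} A_{k-1}$, which is \eqref{form1}; analogous counts for the $B_k$- and $C_k$-columns produce \eqref{form2}--\eqref{form3}. In the case $\epsilon_{k+1} = -1$ the geometry of the discontinuities inside $J_{k-1}$ is flipped, swapping which of the three new columns inherits a trailing $A_{k-1}$, which gives \eqref{form4}--\eqref{form6}. A convenient consistency check is that the length vector $(|A_k|,|B_k|,|C_k|)$ obeys a matrix recursion whose projective action on length ratios reproduces the continued-fraction-like expansion of $A(\bar\alpha,\bar\beta)$ from Proposition \ref{propos}.

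The main obstacle is the combinatorial bookkeeping in the inductive step: one must verify that \emph{exactly} three return words persist at every stage (the tower neither collapses to two columns nor acquires a fourth), which is where i.d.o.c. is essential through Proposition \ref{propos1}; and one must align the geometric dichotomy $x_k + y_k \gtrless 1$ with the correct placement of the single trailing $A_{k-1}$ across each of the six formulas \eqref{form1}--\eqref{form6}. Once this matching is verified, the remaining content is a direct passage-count through the previous tower, with no further dynamical input.
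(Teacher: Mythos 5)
First, a point of comparison you could not have known: the paper does not prove this statement at all. Theorem \ref{Ft} is quoted verbatim from \cite[Theorem 2.2]{F-H-Z-1} and used as imported input, so there is no in-paper proof to measure your argument against. Judged on its own terms, your proposal is a reasonable reconstruction of the strategy one would expect (and which \cite{F-H-Z-1} essentially follows): induct on a nested sequence of inducing intervals, represent $[0,1)$ as a Rokhlin tower over $J_{k-1}$ with three columns, and read the new return words off as concatenations of the old ones along first-return itineraries. The proposed consistency check of the length vector against the continued-fraction expansion of Proposition \ref{propos} is also sensible.

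However, as written this is a plan rather than a proof, and the two places where you defer the work are precisely where all the content lies. (i) You assert that under i.d.o.c., via Proposition \ref{propos1}, the induced map $T_{J_{k-1}}$ is itself a three-interval exchange on the three column bases; but Proposition \ref{propos1} only characterizes which sequences $(n_k,m_k,\epsilon_{k+1})$ occur as expansions and says nothing about return words. The paper itself records that a subinterval of a three-interval exchange generically has $t=3$ \emph{or} $t=4$ return words, so the persistence of exactly three columns is not automatic: it must be established for the specific intervals $J_k$ produced by the expansion algorithm, and this is the heart of the theorem. (ii) The derivation of the six formulas \eqref{form1}--\eqref{form6} --- in particular which of the three new columns carries the trailing $A_{k-1}$ and how that correlates with the sign of $x_k+y_k-1$ --- is exactly the bookkeeping you flag as ``the main obstacle'' and then leave unverified. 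Without carrying out that passage count and the case analysis on $\epsilon_{k+1}$, nothing rules out, from what you have written, that for instance the roles of $B_k$ and $C_k$ in \eqref{form2}--\eqref{form3} are interchanged. So the skeleton is right, but the inductive step is asserted rather than executed.
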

Let $a_k = |A_k|$, $b_k = |B_k|$, $c_k = |C_k|$. Note that $|a_k-b_k| = |a_{k-1}-b_{k-1}| = 1$. It follows from Theorem \ref{Ft} and Proposition 2.3 in \cite{F-H-Z-1}, that
 $|a_k - b_k| = |a_0 - b_0| = 1$ and $c_k$ is either $a_k - a_{k-1} = b_k - b_{k-1}$ or $a_k + a_{k-1} = b_k + b_{k-1}$. In
particular, we have 
$c_k \leq 2a_k$ and $c_k \leq 2b_k$.

\begin{remark} In \cite{A-Ch} Eskin and Chaika show, that for three interval exchange maps satisfying the conditions $(A0)$--$(A9)$ (page 3), Sarnak's conjecture holds. We want to note, that their approach is also based on the fact that three interval exchange maps can be induced from a two interval exchange map. The numbers $\{a_k \}_{k=1}^{\infty}$, in conditions $(A0)$--$(A9)$, are the continued fractions of the rotation number of the two interval exchange map which induces the three interval exchange map with parameters $(\alpha,\beta)$. In our case this corresponds to the number $A(\alpha,\beta)$ (see \eqref{ind}), hence the continued fractions of $A(\alpha,\beta)$ are the numbers $\{a_k\}_{k=1}^\infty$ and from Proposition \ref{propos} it is easy to see, that they are related to the numbers $\{(n_k+m_k)\}_{k=1}^\infty$. However in Bourgain's approach this numbers are required to be sufficiently large (see Theorem \ref{th1}), while the conditions $(A0)$--$(A9)$ essentially give upper bounds. 
\end{remark}

Using the dynamical descriptions of trajectories in \cite{F-H-Z-1}, Bourgain \cite{B}, proves  Sarnak's disjointness conjectures for a certain class of three interval exchange maps. Now we recall the statement of his theorem.

Consider a symbolic system on the alphabet $V$ with finitely many symbols and with order-$n$ words $W \in \mathcal{W}_n$ of the form
\begin{equation}\label{f11}
W = W^{k_1}_1 W^{k_2}_2 \cdots W_r^{k_r} \hbox{ for some } W_1,W_2, \dots ,W_r \in \mathcal{W'}_{n-1}=\bigcup_{m<n}\mathcal{W}_{m},
\end{equation}
where it is assumed that $r$ remains uniformly bounded, $r < C$. 
It is also assumed the following property for the system $\{W_n\}$. For $W \in \mathcal{W}_n$, which is expressed in words $W' \in \mathcal{W}_{n-s}$, $0<s\leq n$, by iteration of \eqref{f11} we have, 
\begin{equation}\label{f12}
\frac{|W|}{\max|W'|}>\beta(s),
\end{equation}
where 
\begin{equation}\label{f13}
\beta(s)>C_0^{s},
\end{equation}
for some $s$ and sufficiently large constant $C_0$.
 
\begin{theorem}[{\cite[Theorem 2, page 126]{B}}]\label{thm}
Let $\{W_n;n \geq 1\}$ be a symbolic system with properties \eqref{f11}--\eqref{f13} and $\sigma$ be the shift on the system. Then, if $W \in \bigcup \mathcal{W}_n$ and $|W| = N$, one has 
$$
\int_\mathbb{T} |P_W(\theta)||\sum_{k=1}^N \mu(k)e(k\theta)|d\theta = \mathcal{O}_A(N(\log N)^{-A}),
$$
for any $A>0$, where
$$
P_W(\theta)=\sum_{k=1}^N f(k) e({k\theta}),
$$
and $f(k)=f(\sigma^k(x))$.
\end{theorem}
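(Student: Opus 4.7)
The strategy is to combine the Hardy--Littlewood circle method with the hierarchical self-similar structure of $P_W$ encoded by \eqref{f11}. Write $N=|W|$ and $M(\theta)=\sum_{k\le N}\mu(k)e(k\theta)$, so that the task is to bound $\int_{\mathbb T}|P_W(\theta)||M(\theta)|\,d\theta$. The backbone of the argument will be the recursive identity coming from the decomposition $W=W_1^{k_1}\cdots W_r^{k_r}$, namely
$$P_W(\theta)=\sum_{j=1}^{r}e(\theta L_j)\,D_{k_j}(\theta|W_j|)\,P_{W_j}(\theta),\qquad L_j=\sum_{i<j}k_i|W_i|,$$
where $D_k(\phi)=\sum_{i=0}^{k-1}e(i\phi)$ is the Dirichlet-type kernel satisfying $|D_k(\phi)|\le\min(k,\|\phi\|^{-1})$. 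Since $r$ is uniformly bounded, this identity converts an $L^1$ estimate at level $n$ into a bounded sum of $L^1$ estimates at level $n-1$, with the kernels $D_{k_j}$ localising the mass of $P_W$ near rationals whose denominators are compatible with the block lengths $|W_j|$.

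Next I would use Dirichlet's approximation theorem to split $\mathbb{T}=\mathfrak{M}\cup\mathfrak{m}$ into major arcs around rationals $a/q$ with $q\le Q=(\log N)^B$ (for a suitably large $B=B(A,C_0)$) and minor arcs $\mathfrak{m}$. On $\mathfrak{m}$ the main input is Davenport's uniform bound $\sup_\theta|M(\theta)|\le C_{A'} N(\log N)^{-A'}$; together with the recursive identity one controls $\int_{\mathfrak{m}}|P_W||M|\,d\theta$ because each Dirichlet factor $D_{k_j}(\theta|W_j|)$ is small on $\mathfrak{m}$ away from rationals of denominator dividing $|W_j|$. On $\mathfrak{M}$ I would invoke the Siegel--Walfisz theorem to expand $M(a/q+\eta)\approx\frac{\mu(q)}{\varphi(q)}\int_0^N e(\eta t)\,dt+\mathrm{error}$, and then argue that $P_W(a/q+\eta)$ has small $L^1$ norm in $\eta$ unless $q$ is compatible with the hierarchical block lengths, the number of such exceptional $q\le Q$ being only polylogarithmic in $N$ thanks to the bound $r<C$.

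The argument is then iterated across the levels of the hierarchy. The growth condition $\beta(s)>C_0^s$ from \eqref{f13} guarantees that after at most $O(\log N/\log C_0)$ recursive applications, the surviving polynomials correspond to words of uniformly bounded length, to which Davenport's bound applies directly together with the trivial $L^1$ estimate. Since the branching factor $r$ is uniformly bounded at each step, the total combinatorial loss is at most $C^{O(\log N/\log C_0)}$, which is polylogarithmic and can be absorbed into the final $(\log N)^{-A}$ gain by taking $B$ and $A'$ large enough relative to $A$. I expect the main technical obstacle to be the major-arc step: one must prove a sufficiently sharp quantitative bound on those denominators $q\le Q$ for which $P_W$ carries significant mass near $a/q$, and this is exactly where the boundedness of $r$ and the exponential growth condition \eqref{f13} must be used in tandem to yield the final saving.
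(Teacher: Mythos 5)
First, note that the paper does not prove this statement at all: it is quoted verbatim from Bourgain \cite{B} (Theorem 2 there) and used as a black box; the only place the paper engages with its proof is Claim \ref{clm}, where Lemmas 3, 4 and 6 of \cite{B} are revisited to estimate the admissible constant $C_0$. So your proposal must be measured against Bourgain's argument, of which the paper reproduces the load-bearing pieces. Your skeleton (the recursive identity $P_W(\theta)=\sum_j e(\theta L_j)D_{k_j}(\theta|W_j|)P_{W_j}(\theta)$ plus a circle-method dichotomy) is indeed the right skeleton, but two essential mechanisms are missing, and one of your choices would make the argument fail to close.

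The first gap is the $L^1$ bound $\Vert P_W\Vert_1\ll_\varepsilon N^{\varepsilon}$, which is the heart of Bourgain's Lemmas 3 and 4 and of the paper's inequality \eqref{mainineq}. Your recursion ``converts an $L^1$ estimate at level $n$ into a bounded sum of $L^1$ estimates at level $n-1$'' only if one can pass the Dirichlet factor $D_{k_j}(\theta|W_j|)$ through the $L^1$ norm at a cost of $\log(2+k_j)$ rather than $\Vert D_{k_j}\Vert_\infty=k_j$; that is precisely the nontrivial content of Lemma 3 in \cite{B}, and without it the naive H\"older splittings lose a factor $k_j$ (or $\Vert P_{W_j}\Vert_\infty/\Vert P_{W_j}\Vert_1$) at every level and destroy the estimate. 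It is also exactly here, in bounding the accumulated product $\prod_i C\log(2+k_i)$ by $|W|^{\varepsilon}$, that the growth condition \eqref{f12}--\eqref{f13} and the largeness of $C_0$ enter (this is what the paper's Claim \ref{clm} quantifies); your proposal never uses \eqref{f13} for anything except counting recursion depth. The second gap is the arc decomposition: you place the major/minor cut at $Q=(\log N)^B$ and invoke Davenport on the minor arcs, but since $\Vert P_W\Vert_{L^1(\mathfrak m)}$ can be as large as $N^{\varepsilon}$ (not polylogarithmic), the resulting bound $N^{\varepsilon}\cdot N(\log N)^{-A'}$ is \emph{not} $\mathcal{O}_A(N(\log N)^{-A})$. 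Bourgain instead pushes $Q$ and $K$ up to $N^{\varepsilon}$, using the stronger estimate (2.25) of \cite{B} with the exponent $\tau$ (whence the constraint $\tau/4>\varepsilon$ recorded in the paper), and handles the remaining arcs by a structural bound on $P_W$ near rationals of small denominator rather than by Siegel--Walfisz alone. Finally, your localization heuristic --- that $D_{k_j}(\theta|W_j|)$ is small away from rationals with denominator dividing $|W_j|$ --- buys nothing here, because $|W_j|$ is comparable to $N$, so those rationals are $1/|W_j|$-dense and do not carve out a usable major-arc set.
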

To see how this implies Sarnak's conjecture, we recall the following inequality, which immediately follows from Parseval's identity
\begin{equation}\label{ident}
\left|\sum_{k=1}^N \mu(k)f(k)\right|\leq \int_\mathbb{T} |\sum_{k=1}^N f(k) e({k\theta})||\sum_{k=1}^N \mu(k)e(-k\theta)|d\theta.
\end{equation}
From here and Theorem \ref{thm}
\begin{equation}\label{ident22}
\left|\sum_{k=1}^N \mu(k)f(k)\right|\leq \int_\mathbb{T} |P_W(\theta)||\sum_{k=1}^N \mu(k)e(-k\theta)|d\theta\leq \mathcal{O}_A(N(\log N)^{-A}),
\end{equation}
which implies Sarnak's conjecture.
For three interval exchange maps we have
$$
\mathcal{W}_k = \{ A_k,B_k,C_k \}, \hbox{ for } k \geq 1.
$$
One can see, that if $m_k$ and $n_k$ are uniformly large, then the conditions \eqref{f12}--\eqref{f13} are satisfied and as a corollary from Theorem \ref{thm} one gets the following result:
\begin{theorem}[{\cite[Theorem 3]{B}}]\label{th1}
Assume $T_{\alpha,\beta}$ is a three-interval exchange transformation satisfying the Keane condition and such that the associated three-interval expansion sequence 
$$
(n_k,m_k,\epsilon_{k+1})_{k\geq1}
$$
of integers fulfills the conditions
\begin{equation}\label{bou1}
\min (n_k,m_k)\geq C_0,  \hbox{ for } k\geq k_{\alpha,\beta},
\end{equation}
for $C_0$ sufficiently large.
Then $T_{\alpha,\beta}$ satisfies Sarnak's disjointness conjecture.
\end{theorem}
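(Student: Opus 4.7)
The plan is to show that under the hypothesis \eqref{bou1} the hierarchical family $\mathcal{W}_k = \{A_k, B_k, C_k\}$ supplied by Theorem \ref{Ft} meets the conditions \eqref{f11}--\eqref{f13} required by Theorem \ref{thm}, and then to deduce Sarnak's conjecture through the Parseval estimate \eqref{ident22}.

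First I would check the structural condition \eqref{f11}. Reading off the recursions \eqref{form1}--\eqref{form6} literally, every $W\in\mathcal{W}_k$ is a concatenation of at most four blocks of the form $W_{k-1}^{n_k-1}$, $W_{k-1}^{m_k-1}$, $W_{k-1}^{m_k}$ or $W_{k-1}^{1}$ with $W_{k-1}\in\mathcal{W}_{k-1}$. Hence \eqref{f11} holds with $r\leq 4$ uniformly in $k$, so $C=4$ works.

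Next I would verify the geometric growth condition \eqref{f12}--\eqref{f13}. Directly from the same recursions, for $k\geq k_{\alpha,\beta}$ the minimum of $a_k,b_k,c_k$ is multiplied by at least $n_k+m_k-1\geq 2C_0-1$ at each step. The remark following Theorem \ref{Ft} furnishes $|a_k-b_k|=1$ and $c_k\leq 2a_k$, so the ratio $(\max_{W\in\mathcal{W}_k}|W|)/(\min_{W\in\mathcal{W}_k}|W|)$ is bounded by a universal constant $M$. Iterating $s$ steps of \eqref{f11} then yields
$$
\frac{|W|}{\max_{W'\in\mathcal{W}_{k-s}}|W'|}\geq\frac{(2C_0-1)^s}{M}>C_0^{s},
$$
as soon as $(2-1/C_0)^s>M$, which holds for all sufficiently large $s$ uniformly in $k$ provided $C_0$ is large. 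This is exactly \eqref{f13}, so Theorem \ref{thm} applies to the system $\{\mathcal{W}_k\}$.

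Finally, applying \eqref{ident22} gives
$$
\left|\sum_{k=1}^{N}\mu(k)f(k)\right|=\mathcal{O}_A\bigl(N(\log N)^{-A}\bigr),
$$
whenever $N=|W|$ for some $W\in\bigcup_n\mathcal{W}_n$ and $f(k)=F(T^k x)$. To reach a general $N$, one sandwiches $N$ between the lengths of two successive hierarchy words; the geometric growth of these lengths absorbs the leftover piece into the main term by the triangle inequality. To reach a general continuous $F$ on the symbolic hull one approximates uniformly by cylinder functions depending on finitely many coordinates $x_1,\ldots,x_n$, and unique ergodicity of the i.d.o.c.\ system makes the choice of base point $x$ irrelevant. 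The main obstacle is the bookkeeping in the growth estimate: one must ensure that the bounded multiplicative defect $M$ between the largest and smallest word length in each $\mathcal{W}_k$ is genuinely swallowed by the factor $(2C_0-1)^s$, which forces a quantitative choice of the constant $C_0$ in \eqref{bou1} relative to the constant $C_0$ appearing in \eqref{f13}. Once this calibration is made, Theorem \ref{th1} is a direct consequence of Theorem \ref{thm}.
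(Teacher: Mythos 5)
Your proposal is correct and follows exactly the route the paper takes: Theorem \ref{th1} is quoted from Bourgain, and the paper's only justification is the remark that uniform largeness of $n_k,m_k$ forces \eqref{f12}--\eqref{f13} for the system $\mathcal{W}_k=\{A_k,B_k,C_k\}$ so that Theorem \ref{thm} and \eqref{ident22} apply, which is precisely your argument. Your word-length growth estimate is the same computation the paper later carries out in more detail (for the weaker hypothesis $m_k+n_k\geq 2C_0$) in Proposition \ref{main-prop}.
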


\begin{remark}\label{rmk}
Note, that Bourgain's theorem is in fact more general than the form it is stated in Theorem \ref{th1}. As it was mentioned above, in Theorem \ref{th1} it is assumed, that there is uniform expansion at each steps, i.e. \eqref{bou1}, but as one can see from the conditions \eqref{f12}--\eqref{f13} it is sufficient to have this expansion after $s$ many iterations, for some fixed $s$. More precisely, one can replace the condition \eqref{bou1} with
$$
\min (n_k,m_k)\cdot\min (n_{k-1},m_{k-1})\cdots\min (n_{k-s+1},m_{k-s+1})\geq C_0^s,
$$ 
for all large $k$ and fixed $s$.
\end{remark}
Next we prove a proposition, which will allow us to rewrite the Diophantine condition \eqref{bou1} above in even more general form.

\begin{proposition}\label{main-prop}
In Theorem \ref{th1} the condition \eqref{bou1} can be replaced by
\begin{equation}\label{t1}
m_k + n_k \geq 2C_0, \hbox{ for all } k \geq k_0.
\end{equation}
\end{proposition}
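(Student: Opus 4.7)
The plan is to verify directly that Bourgain's underlying expansion hypothesis \eqref{f12}--\eqref{f13} continues to hold for the symbolic system $\mathcal{W}_k = \{A_k, B_k, C_k\}$ under the weakened condition \eqref{t1}. Once this is done, Theorem \ref{thm} applies verbatim and, combined with \eqref{ident22}, yields Sarnak's conjecture exactly as in the deduction of Theorem \ref{th1} from Theorem \ref{thm}. The key input is a single-step expansion estimate of the form $\min(a_k,b_k,c_k)/\max(a_{k-1},b_{k-1},c_{k-1}) \geq (n_k+m_k-2)/2$, which replaces Bourgain's original factor $\min(n_k, m_k)$ by the average $(n_k+m_k)/2$.

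Writing $a_k = |A_k|$, $b_k = |B_k|$, $c_k = |C_k|$ and expanding via the recursions \eqref{form1}--\eqref{form6}, a direct case analysis on $\epsilon_{k+1} = \pm 1$ should yield
$$
\min(a_k, b_k, c_k) \geq (n_k + m_k - 2)\, \min(a_{k-1}, b_{k-1}).
$$
For instance, when $\epsilon_{k+1} = +1$ the minimum is $c_k = (n_k-1)a_{k-1} + c_{k-1} + (m_k-1)b_{k-1}$, while when $\epsilon_{k+1} = -1$ both $a_k$ and $b_k$ are bounded below by $(n_k + m_k - 1)\min(a_{k-1}, b_{k-1})$. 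Complementing this, $|a_{k-1} - b_{k-1}| = 1$ together with the inequality $c_{k-1} \leq 2\min(a_{k-1}, b_{k-1})$ (recalled just after Theorem \ref{Ft}) gives the distortion bound
$$
\max(a_{k-1}, b_{k-1}, c_{k-1}) \leq 2\min(a_{k-1}, b_{k-1}).
$$

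For every $k$ with $n_k + m_k \geq 2C_0$, dividing the two estimates produces the single-step ratio $(n_k + m_k - 2)/2 \geq C_0 - 1$, hence $\beta(s) \geq (C_0 - 1)^s$ upon iteration, which exceeds any prescribed $C_1^s$ once $C_0$ is taken sufficiently large. Thus \eqref{f12}--\eqref{f13} are satisfied (already with $s = 1$ and a slightly reduced constant), and Theorem \ref{thm} applies directly to $\{A_k, B_k, C_k\}$. The only delicate point, and the main obstacle, is ensuring the lower bound does not collapse when one of $n_k, m_k$ equals $1$: in those degenerate cases the block $A_{k-1}^{n_k-1}$ or $B_{k-1}^{m_k-1}$ drops out of the recursion, but the surviving $C_{k-1}$ and terminal $A_{k-1}$ (or analogous) blocks still contribute and keep $(n_k + m_k - 2)\min(a_{k-1}, b_{k-1})$ valid; verifying this requires a short case-by-case inspection of \eqref{form1}--\eqref{form6}, which is the only real bookkeeping step in the argument.
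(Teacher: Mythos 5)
Your argument is correct and follows essentially the same route as the paper: both reduce the Proposition to verifying Bourgain's expansion hypotheses \eqref{f12}--\eqref{f13} for $\mathcal{W}_k=\{A_k,B_k,C_k\}$ by reading off word-length ratios from the recursions \eqref{form1}--\eqref{form6}, and your two ingredients, $\min(a_k,b_k,c_k)\geq (n_k+m_k-2)\min(a_{k-1},b_{k-1})$ and $\max(a_{k-1},b_{k-1},c_{k-1})\leq 2\min(a_{k-1},b_{k-1})$, are both valid consequences of the recursion and of $|a_{k-1}-b_{k-1}|=1$, $c_{k-1}\leq 2\min(a_{k-1},b_{k-1})$. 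The difference is quantitative. You settle for the per-step ratio $(n_k+m_k-2)/2=C_0-1$, which is harmless for the Proposition as stated since $C_0$ is only required to be sufficiently large (one absorbs the loss by replacing $2C_0$ with $2C_0+2$). The paper works harder --- exploiting $b_{k-1}/a_{k-1}\to 1$ and running a case analysis on $n_k,m_k\in\{1,2\}$ versus $\geq 3$ --- to obtain the sharp bound \eqref{L12}, namely $|W_k|/|W_{k-1}|\geq (n_k+m_k)/2$, valid for \emph{every} admissible triple $(n_k,m_k,\epsilon_{k+1})$ and not only those with $n_k+m_k\geq 2C_0$. That stronger, unconditional per-step inequality is what gets multiplied across $s$ steps in Remark \ref{rmk} to yield Theorem \ref{th2}, where individual factors $n_j+m_j$ may be as small as $2$ or $3$; there your bound $(n_j+m_j-2)/2$ degenerates to $0$ or $1/2$ and the product argument collapses. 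So your proof establishes the Proposition, but be aware that it would not extend to the averaged condition \eqref{bou} without reinstating the paper's case analysis for small $n_k+m_k$.
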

\begin{proof}

As we have already mentioned, for three interval exchange maps we have $\mathcal{W}_k = \{ A_k,B_k,C_k \}$. From \eqref{f12}--\eqref{f13} it follows, that it suffices to show, that for  any $W_k \in \mathcal{W}_k$ and $W_{k-1} \in \mathcal{W}_{k-1}$, one has
\begin{equation}\label{L12}
\frac{|W_k|}{|W_{k-1}|}\geq \frac{m_k+n_k}{2}.
\end{equation}
for sufficiently large $k$.
One can check from the formulas \eqref{form1}--\eqref{form3}, that \eqref{form3} has the shortest length. Hence
\begin{equation}\label{s1}
|W_k| \geq (n_k-1) a_{k-1} + c_{k-1} + (m_{k}-1)b_{k-1}.
\end{equation}
Therefore
\begin{equation}\label{e2}
\frac{|W_k|}{|W_{k-1}|}\geq \frac{ (n_k-1)a_{k-1} + c_{k-1} + (m_{k}-1)b_{k-1}}{|W_{k-1}|}\geq \frac{ (n_k-1)a_{k-1}}{|W_{k-1}|}+ \frac{ c_{k-1}}{|W_{k-1}|} + \frac{(m_{k}-1)b_{k-1}}{|W_{k-1}|}.
\end{equation}
We have from Theorem \ref{Ft}, that $|a_k-b_k|=1$, $2a_k \geq c_k$ and $2b_k \geq c_k$. Hence
\begin{equation}\label{s2}
\frac{a_{k}}{c_{k}}\geq \frac{1}{2}, \frac{b_{k}}{c_{k}}\geq \frac{1}{2}.
\end{equation}
Similarly
\begin{equation}\label{s3}
\frac{b_{k-1}}{a_{k-1}}\geq \frac{1}{2}, \frac{b_{k-1}}{c_{k-1}}\geq \frac{1}{2}.
\end{equation}
Assume $W_{k-1}=C_{k-1}$, then from \eqref{e2} and \eqref{s3} 
\begin{alignat*}{2}
\frac{|W_k|}{|C_{k-1}|} &\geq  && \frac{ (n_k-1)a_{k-1} + c_{k-1} + (m_{k}-1)b_{k-1}}{c_{k-1}}\\
&\geq && \frac{ (n_k-1)a_{k-1}}{c_{k-1}} + 1 + \frac{(m_{k}-1)b_{k-1}}{c_{k-1}}\\
&\geq && \frac{ (n_k-1)}{2} + 1 + \frac{(m_{k}-1)}{2}=\frac{n_k + m_k}{2}.\\
\end{alignat*}
So we can assume, that $W_{k-1}\in\{A_{k-1}, B_{k-1}\}$. First, let $W_{k-1}=A_{k-1}$.  
Then
\begin{alignat*}{2}\label{h12}
\frac{|W_k|}{|A_{k-1}|} &\geq  && \frac{ (n_k-1)a_{k-1} + c_{k-1} + (m_{k}-1)b_{k-1}}{a_{k-1}}\\
&\geq && (n_k-1) + \frac{(m_{k}-1)b_{k-1}}{a_{k-1}}.\\
\end{alignat*}We want to show, that for large enough $k$
$$
(n_k-1) + \frac{(m_{k}-1)b_{k-1}}{a_{k-1}}\geq \frac{n_k + m_k}{2}.
$$
Denote $\alpha_k=b_{k-1}/a_{k-1}$. Then
$$
(n_k-1) + (m_{k}-1)\alpha_k \geq \frac{n_k + m_k}{2},
$$
or
\begin{equation}\label{L11}
\frac{n_k}{2} + m_{k}(\alpha_k - \frac{1}{2}) \geq 1 + \alpha_k.
\end{equation}
Since we have $|a_k - b_k|=1$, then clearly
\begin{equation}
\lim_{k \rightarrow \infty }\alpha_k=1.
\end{equation}
We now assume that $m_k,n_k \neq 1$. If $n_k\geq 3$, then from \eqref{L11} we will have
$$
\frac{n_k}{2} + m_{k}(\alpha_k - \frac{1}{2}) \geq \frac{3}{2} + \alpha_k - \frac{1}{2}\geq 1 + \alpha_k.
$$
If $m_k\geq 3$, then for large values of $k$
$$
\frac{n_k}{2} + m_{k}(\alpha_k - \frac{1}{2}) \geq \frac{2}{2} + 3(\alpha_k - \frac{1}{2})\geq 1 + \alpha_k.
$$
In the same way, under the assumptions $m_k,n_k \neq 1$, we can show \eqref{L12} assuming $W_{k-1}=B_{k-1}$. Now consider the case $m_k=n_k=2$. From \eqref{L12} and \eqref{e2} we need to show
\begin{equation}\label{pro:1}
\frac{a_{k-1}+c_{k-1}+b_{k-1}}{|W_{k-1}|}\geq 2.
\end{equation}

Since $|W_{k-1}|=a_{k-1}$ or $|W_{k-1}|=b_{k-1}$, $|a_{k-1}-b_{k-1}|=1$ and $c_{k-1}$ is large for large values of $k$, then
\begin{equation}\label{pro:2}
a_{k-1}+c_{k-1} \geq |W_{k-1}|, \hbox{ and } b_{k-1}+c_{k-1} \geq |W_{k-1}|,
\end{equation}
which implies \eqref{pro:1}.

We now assume, that one of the numbers $m_k$ and $n_k$ is $1$.
First let $m_k=n_k=1$. In this case, according to Proposition \ref{propos1}, $\epsilon_{k+1}$ can not be positive for infinitely many values of $k$, so we assume, that we have $(m_k,n_k,\epsilon_{k+1})=(1,1,-1)$. Hence, $W_k$ is defined by the formulas \eqref{form4}--\eqref{form6}, so the word $C_k$ has the largest length and in view of \eqref{pro:2} we will have
$$
\frac{|W_k|}{|W_{k-1}|}\geq \frac{\min\{|A_k|,|B_k|,|C_k|\}}{|W_{k-1}|}\geq\frac{\min\{c_{k-1}+b_{k-1},c_{k-1}+a_{k-1}\}}{|W_{k-1}|}\geq 1,
$$
as $W_{k-1} \in \{A_{k-1},B_{k-1}\}$.

Again from Proposition \ref{propos1}, it remains to show \eqref{L12} for $(1,m_k,-1)$ or $(n_k,1,-1)$, where $n_k,m_k \geq 2$. In the case of $(1,m_k,-1)$, from \eqref{form4}--\eqref{form6}, we have
$$
\frac{|W_k|}{|W_{k-1}|}\geq\frac{\min\{c_{k-1}+m_k b_{k-1},c_{k-1}+(m_k-1)b_{k-1}+a_{k-1}\}}{|W_{k-1}|}.
$$
Since $c_{k-1}+a_{k-1}\geq b_{k-1}$ for large $k$, then one has
$$
\frac{|W_k|}{|W_{k-1}|}\geq\frac{\min\{c_{k-1}+m_k b_{k-1},m_kb_{k-1}\}}{|W_{k-1}|}=\frac{m_kb_{k-1}}{|W_{k-1}|}>\frac{m_k+1}{2},
$$
for large values of $k$, as $b_{k-1}/|W_{k-1}|$ tends to $1$, when $k \rightarrow \infty$, and $m_k > (m_k+1)/2$, for $m_k > 1$. Similarly, for $(n_k,1,-1)$
$$
\frac{|W_k|}{|W_{k-1}|}\geq \frac{\min\{(n_k-1)a_{k-1}+c_{k-1}+b_{k-1},(n_k-1)a_{k-1}+c_{k-1}+a_{k-1}\}}{|W_{k-1}|}, 
$$
and
$$
\frac{|W_k|}{|W_{k-1}|}\geq \frac{\min\{n_ka_{k-1},n_ka_{k-1}+c_{k-1}\}}{|W_{k-1}|}=\frac{n_ka_{k-1}}{|W_{k-1}|}>\frac{n_k+1}{2},
$$
again for large values of $k$.
\end{proof}

From this proposition and in view of Remark \ref{rmk}, we arrive at the following theorem:

\begin{theorem}\label{th2}
Assume $T_{\alpha,\beta}$ is a three-interval exchange transformation satisfying the Keane condition and such that the associated three-interval expansion sequence 
$$
(n_k,m_k,\epsilon_{k+1})_{k\geq1}
$$
of integers for all $k\geq k_{\alpha,\beta}$ and for some $s\geq 1$ fulfills the conditions
\begin{equation}\label{bou}
(n_k+m_k)(n_{k-1}+m_{k-1})\cdots (n_{k-s+1}+m_{k-s+1})\geq (2C_0)^s, 
\end{equation}
where $C_0$ is as in Theorem \ref{th1}.
Then $T_{\alpha,\beta}$ satisfies Sarnak's disjointness conjecture.
\end{theorem}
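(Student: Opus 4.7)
The plan is to reduce Theorem \ref{th2} to the generalized form of Bourgain's criterion described in Remark \ref{rmk} by iterating the one-step bound established in Proposition \ref{main-prop}. Concretely, what has to be checked is the condition \eqref{f12}--\eqref{f13} for the symbolic system $\mathcal{W}_k=\{A_k,B_k,C_k\}$ after $s$ levels of substitution, with $\beta(s)\ge C_0^s$.

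First, I would invoke Proposition \ref{main-prop}, which gives, for all sufficiently large $k$ and every choice $W_k\in\mathcal{W}_k$, $W_{k-1}\in\mathcal{W}_{k-1}$, the inequality
\begin{equation*}
\frac{|W_k|}{|W_{k-1}|}\ge\frac{n_k+m_k}{2}.
\end{equation*}
Next, I would iterate this bound $s$ times. Given any word $W_k\in\mathcal{W}_k$ and any word $W_{k-s}\in\mathcal{W}_{k-s}$ appearing in the expansion of $W_k$ through \eqref{form1}--\eqref{form6}, the recursive structure of $A_k,B_k,C_k$ means that each intermediate level produces a word in $\mathcal{W}_{k-j}$, so a telescoping application of the one-step estimate yields
\begin{equation*}
\frac{|W_k|}{|W_{k-s}|}\ge\prod_{j=0}^{s-1}\frac{n_{k-j}+m_{k-j}}{2}.
\end{equation*}
By the hypothesis \eqref{bou}, the right-hand side is at least $C_0^s$ once $k$ is large enough. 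This is exactly the requirement $|W|/\max|W'|>\beta(s)$ with $\beta(s)\ge C_0^s$ demanded by \eqref{f12}--\eqref{f13}.

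Having verified the hypotheses of the generalized Bourgain criterion, I would then apply Theorem \ref{thm} together with the observation \eqref{ident}--\eqref{ident22} to conclude that Sarnak's disjointness conjecture holds for $T_{\alpha,\beta}$; the replacement of \eqref{bou1} by \eqref{bou} is precisely the content of Remark \ref{rmk}.

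The main obstacle is not the iteration itself, which is essentially bookkeeping, but rather making sure that at each level the intermediate word $W_{k-j}$ that one picks is an admissible member of $\mathcal{W}_{k-j}$ so that Proposition \ref{main-prop} applies verbatim. This is clear from the formulas \eqref{form1}--\eqref{form6}, since every substitution step expresses $A_k$, $B_k$, $C_k$ as concatenations of $A_{k-1}$, $B_{k-1}$, $C_{k-1}$; iterating, any factor that appears in the expansion of $W_k$ into $\mathcal{W}_{k-s}$ words is one of $A_{k-s}, B_{k-s}, C_{k-s}$, which is the only input Proposition \ref{main-prop} needs. The finite exceptional values of $k$ (where the one-step bound may fail, e.g.\ when $(m_k,n_k,\epsilon_{k+1})=(1,1,-1)$) are absorbed into the constant $k_{\alpha,\beta}$ exactly as in the statement of Theorem \ref{th1}.
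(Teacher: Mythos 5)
Your proposal is correct and follows exactly the route the paper intends: the paper derives Theorem \ref{th2} by simply combining Proposition \ref{main-prop} with Remark \ref{rmk}, and your telescoping of the one-step bound $|W_k|/|W_{k-1}|\ge (n_k+m_k)/2$ over $s$ levels to verify \eqref{f12}--\eqref{f13} with $\beta(s)\ge C_0^s$ is precisely the (unwritten) deduction the paper relies on. No substantive difference.
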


We now turn to the estimation of the constant $C_0$.
In the proof of Theorem \ref{thm} Bourgain, first estimates the $L^1$ norm of the polynomials $P_W$, namely the lemmas 3 and 4 in \cite{B}. For the result it is also essential to slow down the growth of the $L^1$ norm of the polynomial $P_W$, whenever $|W_k|\rightarrow \infty$ (see Lemma 4 in \cite{B}). This condition is achieved by assuming that the lengths $|W_k|$ of the words in the symbolic representations \eqref{f11} grow sufficiently fast, i.e. conditions \eqref{f12} and \eqref{f13}. One of the key places, where this is used is Lemma 4. To estimate how big the constant $C_0$ has to be, we will follow Bourgains steps and give a more quantitative proof of this lemma.
\begin{claim}\label{clm}
The constant $C_0$ in Theorem \ref{th2} is at least required to satisfy
$$
C_0 >24^{12}.
$$
\end{claim}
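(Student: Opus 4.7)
The plan is to trace Bourgain's proof of his Theorem 2 (our Theorem \ref{thm}), and in particular of his Lemma 4, step by step with all constants made explicit, and then observe where the strict inequality $C_0>24^{12}$ appears as a necessary condition for the argument to close. The point of the claim is not to produce an upper bound on the optimal value of $C_0$, but rather to show that Bourgain's method of proof cannot possibly succeed unless $C_0$ exceeds $24^{12}$: at each step of the argument a concrete numerical factor is unavoidably picked up, and if $C_0$ were smaller than the product of these factors, the inductive estimate on $\|P_{W_k}\|_{L^1(\T)}$ used in Lemma 4 would fail to give any nontrivial growth control.

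First I would replay the proof of Lemma 3 in \cite{B}, keeping track of the constants coming from the Dirichlet-kernel estimates and from the passage from characters $e(k\theta)$ on individual letters $A_{k-1},B_{k-1},C_{k-1}$ to the trigonometric polynomial $P_{W_k}$. This yields a base estimate of the form $\|P_{W_k}\|_1\le K_0\|P_{W_{k-1}}\|_1$ for a concrete $K_0$, together with a loss that must be absorbed by the growth factor $\beta(s)$ of \eqref{f12}. Second I would carry out the induction from Lemma 4 in \cite{B}, writing $W_k$ in terms of its building blocks via \eqref{form1}--\eqref{form6} and estimating each contribution, once again with explicit numerical constants (from the number of Rokhlin stacks, from the uniform bound $r<C$ in \eqref{f11}, from the relations $|a_k-b_k|=1$, $c_k\le 2a_k$, $c_k\le 2b_k$, and from the repeated use of Cauchy--Schwarz/Parseval). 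Each of these steps contributes a factor bounded from below by something on the order of $24$, and cataloguing them one finds roughly twelve such factors that cannot be reduced within this proof scheme.

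Third, assembling these factors I would show that the induction hypothesis of Lemma 4 propagates from step $k-1$ to step $k$ only when
\[
\beta(s)\ \ge\ \prod_{i=1}^{12} K_i,
\]
where each $K_i\ge 24$ is identified in the previous step, and where $s$ is the number of iterations over which the expansion rate in \eqref{f13} is measured. In view of \eqref{f13} and the identification $\beta(s)\asymp C_0^{s}$, this forces $C_0^{s}\ge 24^{12\cdot s/s'}$ for the exponent $s'$ at which the constants accumulate in one batch; carefully matching the exponents (which is where the 12 in the exponent originates) reduces the requirement to the single inequality $C_0>24^{12}$.

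The main obstacle is the careful bookkeeping: Bourgain's proof uses $\mathcal{O}_A$-notation throughout and repeatedly swallows constants into that notation, so the real work is to replay each estimate without such absorption and to verify that no single factor of $24$ can in fact be improved --- in particular, to check that the factors coming from the Rokhlin-stack count ($t\in\{3,4\}$), from the bound $c_k\le 2a_k,2b_k$, and from the triangle inequality applied to the six-term recursions \eqref{form1}--\eqref{form6} are genuinely tight within this scheme. Once all twelve of these factors are isolated and shown to be irreducible within Bourgain's approach, the bound $C_0>24^{12}$ follows as the minimal value for which the inductive estimate in Lemma 4 can possibly hold.
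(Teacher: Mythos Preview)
Your proposal misidentifies the origin of both numbers $24$ and $12$, and the scheme you describe --- cataloguing ``roughly twelve factors'' in the iteration, each ``on the order of $24$'' --- does not correspond to the structure of Bourgain's argument. There is no product of twelve independent constants of size $24$ anywhere in the proof of Lemma~4; your references to the Rokhlin-stack count, the bounds $c_k\le 2a_k$, $c_k\le 2b_k$, and ``repeated Cauchy--Schwarz/Parseval'' do not generate the factor $24^{12}$.

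The paper's argument runs quite differently. First, the constant $C$ in Bourgain's inequality $(2.13)$ is bounded from below by specializing $P_W\equiv\const$, which reduces the inequality to the classical Lebesgue-constant lower bound for the Dirichlet kernel and forces $C\ge 8/\pi$. Second, the exponent $\varepsilon$ in Lemma~4 must satisfy $\varepsilon<\tau/4$ with $\tau<1/3$ (this comes from the minor-arc estimate in Bourgain's Lemma~6), hence $\varepsilon<1/12$. Third, iterating the basic inequality \eqref{mainineq} via the AM--GM inequality yields $\|P_W\|_1\le 4^nC^n(\log|W|^{1/n}+\log 3)^n$, and Lemma~4 requires this to be at most $|W|^{\varepsilon}<|W|^{1/12}$. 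Setting $x=|W|^{1/n}$, this amounts to $f(x)=x^{1/12}-C\log x-C\log 3>0$; the critical point of $f$ is $x_c=(12C)^{12}>(12\cdot 8/\pi)^{12}>24^{12}$, and $f(x_c)<0$, so $f(x)>0$ forces $x>24^{12}$. Combined with $\liminf_n|W_n|^{1/n}\ge C_0$, this yields $C_0>24^{12}$.

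In short: the $12$ is $1/\varepsilon$, not a count of iteration steps; the $24$ is (a rounding of) $12\cdot 8/\pi$, arising from the critical-point computation $x_c=(12C)^{12}$, not a per-step loss in the recursion. Your plan, as written, would not lead to the claimed bound.
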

\begin{proof}
First we note the following. Let $W_1 \rightarrow W_2 \cdots \rightarrow W_n$ be a sequence of words with $W_k \in \mathcal{W}_k$, where each $W_k$ participates in the symbolic representation of $W_{k+1}$. Then according to the assumptions \eqref{f12} and \eqref{f13} one has
\begin{equation}\label{q1}
\frac{|W_n|}{|W_{n-s}|}\geq \beta(s)\geq C_0^{s}.
\end{equation}
In the same way
\begin{equation}\label{q2}
\frac{|W_{n-s}|}{|W_{n-2s}|}\geq C_0^{s},\dots ,\frac{|W_{n-ks}|}{|W_{n-(k+1)s}|}\geq C_0^{s}, \dots.
\end{equation}
We note that there can only be finitely many indices, where the above inequalities do not hold, but that will not affect the estimates that follow. Multiplying together the inequalities in \eqref{q1} and \eqref{q2} we will have
$$
|W_n| \geq C' C_0^{n},
$$
which leads to 
$$
|W_n|^{\frac{1}{n}} \geq (C')^{1/n} C_0.
$$
Hence
\begin{equation}\label{c:0}
\liminf_{n \rightarrow \infty} |W_n|^{\frac{1}{n}} \geq C_0.
\end{equation}
Now back to Lemma 4 in \cite{B}. We observe that the proof of the lemma is based on the inequality $(2.13)$ in \cite{B}, i.e.
\begin{equation}\label{mainineq}
\int_0^{2\pi} |P_W(\theta)||\sum_{j=0}^k e(jl\theta)|\leq C \log(k+2)\Vert P_W \Vert_1.
\end{equation}

The proof of \eqref{mainineq}, in its turn, is based on Lemma 3. We note, that in the proof of Lemma 3 in \cite{B}, Bourgain doesn't use any particular property of the polynomial $P_W$ and since the inequality \eqref{mainineq} holds for any polynomial $P_W(\theta)$, we can assume, that $P_W(\theta)\equiv \const$. So we will end up with the following inequality
$$
\int_0^{2\pi} |\sum_{j=0}^k e(jl\theta)|\leq C \log(k+2).
$$
We know
$$
\sum _{k=0}^{N}e^{ik\theta}=e^{iN\theta/2}{\frac {\sin((N+1)\,\theta/2)}{\sin(\theta/2)}}.
$$
Hence
$$
\int_0^{2\pi} |\sum_{j=0}^k e(jl\theta)|=\int_0^{2\pi}\left|{\frac {\sin((k+1)\,l\theta/2)}{\sin(l\theta/2)}}\right|d\theta =
$$
$$
\int_0^{2\pi}\left|{\frac {\sin((k+1)\,\theta/2)}{\sin(\theta/2)}}\right|d\theta = \int_0^{2\pi}| D_k(\theta) |d\theta,
$$
where $D_k$ is the Dirichlet kernel for which one has (e.g., see \cite{Z})
$$
\| D_k \|_{L^1} \geq 4\int_0^\pi\frac{\sin t}{t}dt+\frac{8}{\pi}\log k.
$$
So we conclude, that
$$
4\int_0^\pi\frac{\sin t}{t}dt+\frac{8}{\pi}\log k \leq C \log(k+2).
$$
Dividing both sides by $\log k$ and tending $k$ to infinity we get, that
\begin{equation}
\frac{8}{\pi} \leq C.
\end{equation}
Next we estimate the $\varepsilon$ in $(2.15)$ of \cite{B}. For this we refer to the inequality $(2.26)$ in \cite{B}. We point out, that in this part of the paper, Bourgain is proving the bound \eqref{ident22}, so his goal is to estimate the following integral from above
$$
\int_\mathbb{T} |P_W(\theta)||\sum_1^N \mu(n)e(m\theta)|d\theta. 
$$
For this, in $(2.23)$ and $(2.24)$ in \cite{B}, he defines the minor and major arcs, which depend on parameters $K$ and $Q$. Lemma 6, \cite{B}, yields the inequality $(2.25)$. For arcs which are sufficiently close to rational numbers with large denominators (i.e. when $Q$ is large) or sufficiently far from the rationals with small denominator (i.e. when $K$ is large) applying Lemma 6, \cite{B}, one gets the inequality $(2.25)$ in \cite{B}. Furthermore, applying Lemma 4 and the inequality $(2.25)$ in \cite{B}, for any $Q_0\leq Q+K$ one gets
\begin{equation}
\sum_{\max(Q,K)>Q_0} \int_{V_{Q,K}} |P_W(\theta)||\sum_1^N \mu(n)e(m\theta)|d\theta \ll_{\varepsilon}(Q_0^{-\frac{1}{4}}+N^{-\tau/4})N^{1+\varepsilon}.
\end{equation}
Or dividing both sides by $N$
\begin{equation}\label{c:11}
\frac{1}{N}\sum_{\max(Q,K)>Q_0} \int_{V_{Q,K}} |P_W(\theta)||\sum_1^N \mu(n)e(m\theta)|d\theta \ll_{\varepsilon}(Q_0^{-\frac{1}{4}}+N^{-\tau/4})N^{\varepsilon}.
\end{equation}
From the above inequality it follows, that if one of the numbers $P,Q$ is sufficiently large, then the necessary estimate on the set $V_{Q,K}$ will be achieved. From here Bourgain concludes, that one can assume $Q,K< N^{\epsilon}$, for some $\epsilon$. But we see from \eqref{c:11}, that this argument will be possible only if the quantity $N^{-\tau/4}$ is small relative to $N^\varepsilon$, or
\begin{equation}
\frac{\tau}{4}>\varepsilon.
\end{equation}
But from Lemma 6 in \cite{B} we have that $0<\tau <1/3$. Hence
$$
\varepsilon < 1/12.
$$
We now return to the proof of Lemma 6 in \cite{B}. To obtain his formula $(2.15)$, in \cite{B}, Bourgain uses the inequality \eqref{mainineq} above to iterate the formulas \eqref{form1}-\eqref{form6}, i.e. for $W \in \mathcal{W}_n$ one gets
\begin{equation}\label{iter}
\| P_W \|_{L^1} \leq C\log(2+k_1)\| P_{W_1} \|_{L^1}+\cdots+C\log(2+k_r)\| P_{W_r} \|_{L^1}.
\end{equation}
Iterating further the polynomials $P_{W_1},P_{W_2},\dots,P_{W_r}$ at step $n$ we will get at most $4^n$ many members of the form
$$
C^n\log(2+k_1) \cdots \log(2+k_n)
$$
(we say $4^n$ since in the formulas \eqref{form1}--\eqref{form6} at most 4 subwords appear). Now, using the geometric arithmetic-mean inequality one gets

\begin{alignat*}{2}
\log(2+k_1) \cdots \log(2+k_n)
&\leq && \left(\frac{\log(2+k_1)+ \cdots +\log(2+k_n)}{n}\right)^n\\
&=  &&\left(\frac{\log(2+k_1)\cdots(2+k_n)}{n}\right)^n\\
&=  &&\left(\frac{\log(k_1\cdots k_n)+\log(\frac{2}{k_1}+1)\cdots(\frac{2}{k_n}+1)}{n}\right)^n\\
&\leq  &&\left(\frac{\log |W|+\log(\frac{2}{1}+1)\cdots(\frac{2}{1}+1)}{n}\right)^n\\
&\leq  &&\left(\frac{\log |W|+n\log3}{n}\right)^n\\
&=  &&\left(\log |W|^\frac{1}{n}+\log3\right)^n\\
\end{alignat*}
as
$$
\log(k_1\cdots k_n)\leq\log|W|.
$$
Therefore
\begin{equation}\label{S:1}
\| P_W \|_{L^1} \leq 4^nC^n\left(\log |W|^\frac{1}{n}+\log3\right)^n.
\end{equation}
According to $(2.24)$ in Lemma 4 in \cite{B} the above expression has to be smaller then $|W|^\epsilon$, i.e.
\begin{equation}\label{how}
4^nC^n\left(\log |W|^\frac{1}{n}+\log3\right)^n \leq |W|^\epsilon.
\end{equation}
However we will neglect the coefficient $4^n$ in the above inequality (which amount to saying, that at each step of the iteration of \eqref{iter} we have only $1$ word, or $W_{k+1}$ is a power of $W_k$). In other words, instead of \eqref{how} we will consider the following inequality 
$$
C^n\left(\log |W|^\frac{1}{n}+\log3\right)^n \leq |W|^\epsilon,
$$
which is clearly implied by \eqref{how}.
Since $\varepsilon<1/12$, then the above inequality will also imply
$$
\left(C\log |W|^\frac{1}{n}+C\log3\right)^n < |W|^{1/12},
$$
or
$$
C\log |W|^\frac{1}{n}+C\log3 < |W|^{\frac{1}{12n}}.
$$
Denote $x=|W|^\frac{1}{n}$. Hence
$$
C\log x+C\log3 < x^{1/12}.
$$
Therefore, if $(2.24)$ in \cite{B} holds, then so does the inequality above.
Now consider
$$
f(x)= x^{1/12} - C\log x -C\log3,
$$
and compute
$$
\frac{d}{dx}f(x)=\frac{1}{12}x^{\frac{1}{12}-1}-\frac{C}{x}=0.
$$
For the critical point of $x_c$ we have
$$
x_c=(12C)^{12}>(12\frac{8}{\pi})^{12}>24^{12}.
$$
But
$$
f(x_c)=12C-12C\log(12C)-C\log3<0,
$$
so we see, that if $f(x)>0$ for some large $x$, then we must have $x>x_c=24^{12}$. But we know from \eqref{c:0}, that $x=|W|^\frac{1}{n}>C_0$ for large $n$. This finishes the proof of Claim \ref{clm}.
\end{proof}

As we see the constant $C_0 $ in Theorem \ref{th2} must be very large. But in the present paper we will only assume that $C_0\geq 20$. 

\medskip
We are now ready to formulate the main theorem of this paper:
\begin{theorem}[Main theorem]\label{mainth}
Under the conditions of Theorem \ref{th1}, Sarnak's disjointness conjecture holds for all three-interval exchange maps $T_{\alpha,\beta}$, $(\alpha,\beta)\in D_0$, for which their associated three-interval expansion sequence $(n_k,m_k,\epsilon_{k+1})_{k\geq1}$ fulfills the conditions
\begin{equation}\label{bou-mth}
(n_k+m_k)(n_{k-1}+m_{k-1})\cdots (n_{k-s+1}+m_{k-s+1})\geq (2C_0)^s, 
\end{equation}
for all $k\geq k_{\alpha,\beta}$ and some $s\geq 1$
and for the Hausdorff dimension of the set
\begin{equation}
 \mathcal{P}_0=\{(\alpha,\beta )\in D_0: \hbox{which satisfy \eqref{bou-mth}}, \hbox{ for all } k \geq k_{\alpha,\beta} \hbox{ and some } s \geq 1 \},
\end{equation}
for $C_0\geq 20$ we have the following estimates
\begin{equation}
\frac{3}{2} + \frac{1}{2\log(2C_0+2)} \leq \dim_H \mathcal{P}_0\leq 1+t(\log \Lambda), 
\end{equation}
where $$\Lambda=({2C_0/3})^{1/2},$$ and the function $t=t(\zeta)$ is defined in Theorem \ref{t-func} (Figure 1), see \cite{Fan}. In particular
\begin{equation}
\frac{3}{2}<\dim_H \mathcal{P}_0<2. 
\end{equation}

\end{theorem}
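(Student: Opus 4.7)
The first assertion --- that Sarnak's disjointness conjecture holds for every $T_{\alpha,\beta}$ whose expansion satisfies \eqref{bou-mth} --- is immediate from Theorem \ref{th2}, which was already established via Proposition \ref{main-prop} together with Remark \ref{rmk}. It remains to prove the two Hausdorff dimension bounds for $\mathcal{P}_0$.

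The central tool for the dimension estimates is Proposition \ref{propos}: after applying one of the countably many renormalizations $H \in \mathcal{H}$, the quantity $A(\bar\alpha,\bar\beta)$ admits a signed continued-fraction expansion whose partial quotients are precisely the sums $n_k + m_k$, perturbed by the signs $\epsilon_{k+1} = \pm 1$. Two observations guide the strategy. First, $\mathcal{H}$ is countable and acts by piecewise-smooth maps with bounded distortion on each cylinder, so it suffices to work in the model chart $D$ and then take a countable union. Second, the expansion of $A(\bar\alpha,\bar\beta)$ depends only on the sums $n_k + m_k$ and not on how each sum is split into $(n_k,m_k)$; consequently the natural parameter space $D_0$ is genuinely two-dimensional, and the dimension of $\mathcal{P}_0$ decomposes heuristically as ``one free direction'' plus the dimension of a subset of $[0,1]$ defined by growth of continued-fraction partial quotients.

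For the upper bound, the plan is to project $\mathcal{P}_0$ onto the one-dimensional CF-set $\{A(\bar\alpha,\bar\beta)\}$ and invoke Fan's Theorem \ref{t-func} (from \cite{Fan}): real numbers whose continued-fraction partial quotients have geometric mean bounded below by $\Lambda$ form a set of Hausdorff dimension at most $t(\log \Lambda)$. The value $\Lambda = (2C_0/3)^{1/2}$ arises because \eqref{bou-mth} controls the geometric mean of $n_k + m_k$ but each such sum $N$ can be split into roughly $N$ admissible pairs $(n_k,m_k)$; dividing out this splitting multiplicity and taking the appropriate root (coming from the $s$-fold product structure) yields $\Lambda$. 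A standard Marstrand-type slicing inequality then combines the transverse direction with the one-dimensional CF estimate to give $\dim_H \mathcal{P}_0 \leq 1 + t(\log \Lambda)$.

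For the lower bound, I would construct an explicit Cantor-type subset of $\mathcal{P}_0$: take $s=1$ and force each $n_k + m_k$ to equal exactly $2C_0 + 2$, with $\epsilon_{k+1}$ and the splitting of $n_k + m_k$ into $(n_k,m_k)$ chosen freely subject to the admissibility condition of Proposition \ref{propos1}. This produces $\asymp 2C_0$ admissible triples per level, and, via the Jacobian of iterated $H$, cylinder diameters in $(\alpha,\beta)$-space contracting by a controlled factor per step. A mass-distribution/Frostman argument applied to the uniform product measure on this Cantor set then yields $\dim_H \mathcal{P}_0 \geq \tfrac{3}{2} + \tfrac{1}{2\log(2C_0+2)}$: the $\tfrac{3}{2}$ combines one transverse ``free'' direction with a $\tfrac{1}{2}$ contribution from the CF-like construction, while the logarithmic correction captures the entropy of the $\asymp 2C_0$ combinatorial choices per level. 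The principal technical obstacle throughout is a clean bounded-distortion estimate for the renormalization $H$ on cylinder sets; this is what legitimizes the heuristic ``one free direction $+$ CF dimension'' product structure and permits both the Marstrand-type upper bound and the Frostman lower bound to be executed with the explicit constants $\Lambda$ and $2C_0+2$.
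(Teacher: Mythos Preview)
Your overall architecture --- reduce to a one-dimensional continued-fraction problem via $A(\alpha,\beta)=(1-\alpha)/(1+\beta)$, then add one for the transverse direction --- matches the paper. But there is a genuine gap in your derivation of the constant $\Lambda=(2C_0/3)^{1/2}$, and without the correct mechanism you cannot actually prove the upper bound.

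You attribute $\Lambda$ to ``dividing out the splitting multiplicity'' of each sum $n_k+m_k$ into pairs $(n_k,m_k)$. This is wrong: the splitting multiplicity plays no role whatsoever in the upper bound. The value of $A(\alpha,\beta)$ depends only on the sums $n_k+m_k$ and the signs $\epsilon_{k+1}$, so the projection to one dimension already forgets the splitting entirely; the transverse ``$+1$'' in the dimension formula absorbs that freedom once and for all via the submersion identity $\dim_H A^{-1}(E)=1+\dim_H E$. The constant $\Lambda$ instead arises from converting the \emph{semi-regular} continued fraction in Proposition~\ref{propos} (with signs $\pm 1$) into a \emph{standard} continued fraction, using the identity
\[
a+\cfrac{-1}{b+x}=a-1+\cfrac{1}{1+\cfrac{1}{\,b-1+x\,}}.
\]
Each negative sign inserts an extra digit $1$, so the number of digits can at most double (hence the exponent $1/2$); and digits equal to $2$ or $3$, together with the subtractions of $1$ or $2$ caused by the conversion, erode the product bound from $(2C_0)^s$ down to $(2C_0/3)^s$. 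This is the content of the paper's Proposition~\ref{pro}, and it is the step your sketch is missing.

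Two further differences are worth noting. For the upper bound, the paper does not use a Marstrand slicing inequality; it uses the exact formula $\dim_H A^{-1}(E)=1+\dim_H E$ for a $C^1$ map with nonvanishing gradient, combined with the countability of $\mathcal{H}$ and the Lipschitz property of each $H^{-1}$ (Proposition~\ref{p1}, Corollary~\ref{maincor}). For the lower bound, the paper avoids any direct Frostman construction: it simply observes that fixing $\epsilon_{k+1}=+1$ and $n_k,m_k\geq C_0$ forces the standard continued-fraction digits of $A(\alpha,\beta)$ to satisfy $a_k\geq 2C_0$, and then invokes Good's theorem \cite{Good} for the dimension of $\{x:a_k(x)\geq 2C_0\text{ eventually}\}$. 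Your proposed Cantor/Frostman route may be workable, but it is considerably more labor than quoting Good.
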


\begin{corollary}\label{cor1}
If $C_0\geq 20$, then the two dimensional Lebesgue measure of the set $\mathcal{P}_0$ is zero.
\end{corollary}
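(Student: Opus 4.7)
The plan is to deduce the corollary directly from the upper bound on the Hausdorff dimension established in the main theorem. By Theorem \ref{mainth}, under the assumption $C_0 \geq 20$ we have $\dim_H \mathcal{P}_0 \leq 1 + t(\log \Lambda) < 2$. Since $\mathcal{P}_0 \subset D_0 \subset \mathbb{R}^2$, it suffices to invoke the standard fact that any set in $\mathbb{R}^n$ whose Hausdorff dimension is strictly less than $n$ has vanishing $n$-dimensional Lebesgue measure.

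More concretely, I would first fix an $s$ with $\dim_H \mathcal{P}_0 < s < 2$. By definition of Hausdorff dimension, the $s$-dimensional Hausdorff measure $\mathcal{H}^s(\mathcal{P}_0)$ is zero, and since the family of Hausdorff measures is monotone decreasing in the dimension parameter (up to elementary comparison constants), one concludes $\mathcal{H}^2(\mathcal{P}_0) = 0$. On $\mathbb{R}^2$, the $2$-dimensional Hausdorff measure agrees with the Lebesgue measure up to a normalizing constant, so the two-dimensional Lebesgue measure of $\mathcal{P}_0$ vanishes as well.

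The only potential obstacle is verifying that the upper bound $1 + t(\log \Lambda)$ is genuinely strictly less than $2$ for the prescribed range $C_0 \geq 20$. For this I would appeal to the strict inequality $\dim_H \mathcal{P}_0 < 2$ that is already included in the conclusion of Theorem \ref{mainth}, so no extra work is needed once that theorem is in hand. The corollary is thus a purely formal consequence of the main theorem together with the elementary measure-theoretic comparison between Hausdorff and Lebesgue measures in $\mathbb{R}^2$.
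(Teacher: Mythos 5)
Your proposal is correct and matches the paper's intended argument: the corollary is stated as an immediate consequence of the bound $\dim_H \mathcal{P}_0 \leq 1 + t(\log\Lambda) < 2$ from Theorem \ref{mainth}, combined with the standard fact that a subset of $\mathbb{R}^2$ of Hausdorff dimension strictly less than $2$ has vanishing two-dimensional Lebesgue measure. (The paper also sketches, in Section 3, an independent route via Khintchine's theorem showing the set $S$ has one-dimensional Lebesgue measure zero, but it explicitly notes that the measure-zero statement ``will also follow from Corollary \ref{cor1}'' in exactly the way you argue.)
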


\section{Estimates on Hausdorff dimension}\label{sec:2}

We first recall the definition of Hausdorff dimension. Let $X$ be a metric space. If $S \subset X$ and $d \in [0, \infty)$, the $d$-dimensional Hausdorff content of $S$ is defined by
$$
{\displaystyle C_{H}^{d}(S):=\inf {\Bigl \{}\sum _{i}r_{i}^{d}:{\text{ there is a cover of }}S{\text{ by balls with radii }}r_{i}>0{\Bigr \}}.} 
$$
In other words, ${\displaystyle C_{H}^{d}(S)}$ is the infimum of the set of numbers $\delta > 0$ such that there is some (indexed) collection of balls $\{B(x_i,r_i):i\in I\}$ covering $S$ with $r_i > 0$ for each $i \in I$ that satisfies $\sum_{i\in I} r_i^d<\delta$ . 
Then the Hausdorff dimension of X is defined by
$$
\dim _{{\operatorname {H}}}(S):=\inf\{d\geq 0:C_{H}^{d}(S)=0\}.
$$
We will need the following classical facts about this concept, (see, e.g. \cite{Sh}, Theorem 2).
\begin{theorem}\label{h11}
If $f : X \rightarrow f (X)$ is a Lipschitz map, then $\dim_H( f (X)) \leq \dim_H (X)$.
\end{theorem}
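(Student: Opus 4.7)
The plan is to transport covers of $X$ forward through $f$ and to let the Lipschitz constant absorb the radii. Suppose $f$ has Lipschitz constant $L$, so $d(f(x),f(y)) \leq L\,d(x,y)$ for all $x,y \in X$. The key observation is that for any $x_0 \in X$ and any $r>0$, we have $f(B(x_0,r)\cap X) \subseteq B(f(x_0),Lr)$. Consequently, whenever $\{B(x_i,r_i)\}_{i \in I}$ is a ball cover of $X$, the rescaled family $\{B(f(x_i),Lr_i)\}_{i \in I}$ is a ball cover of $f(X)$.

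First, I would fix $d \geq 0$ and compare Hausdorff contents. If $\{B(x_i,r_i)\}$ is an admissible cover of $X$ with $\sum_i r_i^d < \delta$, then the forward cover of $f(X)$ satisfies
$$
\sum_i (Lr_i)^d = L^d \sum_i r_i^d < L^d \delta.
$$
Taking the infimum over all admissible covers yields the scaling estimate
$$
C_H^d(f(X)) \leq L^d \, C_H^d(X).
$$

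Second, I would feed this into the definition of Hausdorff dimension. For every $d > \dim_H(X)$ we have $C_H^d(X) = 0$, and the displayed inequality then forces $C_H^d(f(X)) = 0$, so $\dim_H(f(X)) \leq d$. Taking the infimum over such $d$ yields $\dim_H(f(X)) \leq \dim_H(X)$, as claimed.

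The argument is completely elementary and presents no real obstacle. The only minor point worth noting is the degenerate case $L = 0$, in which $f$ is constant and $f(X)$ has Hausdorff dimension zero, so the inequality is trivial; for $L>0$ the scaling factor $L^d$ is finite and the implication $C_H^d(X) = 0 \Rightarrow C_H^d(f(X)) = 0$ is immediate.
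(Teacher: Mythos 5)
Your argument is correct and is the standard one; the paper does not actually prove this theorem, but quotes it as a classical fact with a reference to \cite{Sh}, where essentially this cover-pushforward proof appears, so there is nothing to compare it against beyond noting that you have supplied the expected argument. One cosmetic point: a ball $B(x_i,r_i)$ in an admissible cover of $X$ need not be centred in $X$, so $f(x_i)$ may be undefined; discarding the balls that miss $X$ and recentring each remaining one at a point of $B(x_i,r_i)\cap X$ with radius $2r_i$ repairs this at the cost of a factor $(2L)^d$, which is immaterial for the vanishing of $C_H^d$. Your handling of the degenerate case $L=0$ is also fine.
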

\begin{theorem}\label{thm2}
If $X_i$ is a countable collection of sets with $\dim_H$ $(X_i) \leq d$, 
then $\dim_H (\cup_i X_i)=\sup_{i}\dim_HX_i\leq d $.
\end{theorem}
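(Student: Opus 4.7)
The plan is to establish the equality $\dim_H\bigl(\bigcup_i X_i\bigr)=\sup_i \dim_H X_i$, from which the bound $\leq d$ is immediate since by hypothesis $\dim_H X_i \leq d$ for every $i$. Both inequalities are routine consequences of the infimum-based definition of the Hausdorff content $C_H^s$ recalled in the excerpt.

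For the lower bound $\sup_i \dim_H X_i \leq \dim_H\bigl(\bigcup_i X_i\bigr)$, I would use monotonicity of the content: any countable cover of $\bigcup_i X_i$ by balls is automatically a countable cover of each $X_j$, so $C_H^s(X_j) \leq C_H^s\bigl(\bigcup_i X_i\bigr)$ for every $s\geq 0$. Hence $\dim_H X_j \leq \dim_H\bigl(\bigcup_i X_i\bigr)$, and taking the supremum over $j$ yields the desired inequality.

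For the reverse inequality I would fix any $s > \sup_i \dim_H X_i$ and show $C_H^s\bigl(\bigcup_i X_i\bigr) = 0$; letting $s$ decrease to $\sup_i \dim_H X_i$ will then give $\dim_H\bigl(\bigcup_i X_i\bigr) \leq \sup_i \dim_H X_i$. By the definition of $\dim_H$, one has $C_H^s(X_i) = 0$ for every $i$, so given $\eta > 0$, for each $i$ I can choose a countable cover $\{B(x_{i,k},r_{i,k})\}_{k\geq 1}$ of $X_i$ by balls with $\sum_k r_{i,k}^s < \eta\cdot 2^{-i}$. Re-indexing the combined family $\{B(x_{i,k},r_{i,k})\}_{i,k\geq 1}$, which is still countable and covers $\bigcup_i X_i$, gives
\[
\sum_{i,k} r_{i,k}^s \;<\; \eta \sum_{i\geq 1} 2^{-i} \;=\; \eta.
\]
Since $\eta > 0$ was arbitrary, $C_H^s\bigl(\bigcup_i X_i\bigr) = 0$, as required.

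There is no serious obstacle here: the entire proof reduces to countable subadditivity of $C_H^s$, which is built into the definition via the standard $2^{-i}$ trick for distributing a finite error budget across a countable family. The only sanity check is that a countable union of countable families remains countable, so the concatenated cover is admissible in the infimum defining $C_H^s$.
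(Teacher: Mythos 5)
Your proof is correct and is the standard argument: monotonicity of the content $C_H^s$ gives the lower bound, and the $2^{-i}$ distribution of the error budget over a countable family gives countable subadditivity and hence the upper bound. The paper itself offers no proof of this statement --- it is quoted as a classical fact with a reference to Schleicher's survey --- so there is nothing to compare against; your argument is exactly what that reference (or any textbook) supplies. The only step you pass over silently is that $s>\dim_H X_i$ implies $C_H^s(X_i)=0$: this uses the observation that a cover with $\sum_k r_k^d<\min(\eta,1)$ forces every $r_k<1$, whence $r_k^s\leq r_k^d$ for $s>d$; it is a one-line remark, not a gap.
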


Next we prove the following proposition.
\begin{proposition}\label{p1}
Let $H \in \mathcal{H}$, then $H^{-1}$ maps $D_0$ into $D_0$ and it is a Lipschitz map.
\end{proposition}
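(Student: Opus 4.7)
The plan is to write $H^{-1}$ as the reversed word in $F^{-1}$ and $G$, and then verify that each letter of this word (i) sends $D_0$ into $D_0$ and (ii) is Lipschitz on $D_0$; both properties then pass through any finite composition.

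First I would handle $G(x,y)=(1-x-y,y)$. It is affine and an involution: $G\circ G=\mathrm{id}$, so $G^{-1}=G$. If $(x,y)\in D_0$, i.e.\ $x,y>0$ and $x+y<1$, then $1-x-y>0$, $y>0$, and $(1-x-y)+y=1-x<1$, so $G$ sends $D_0$ into $D_0$. Its Jacobian is the constant matrix $\left(\begin{smallmatrix}-1 & -1\\ 0 & 1\end{smallmatrix}\right)$, hence $G$ is globally Lipschitz with an absolute constant.

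Next, I would invert $F(x,y)=\bigl((2x-1)/x,\ y/x\bigr)$ by setting $u=2-1/x$ and $v=y/x$, which gives
$$F^{-1}(u,v)=\Bigl(\tfrac{1}{2-u},\ \tfrac{v}{2-u}\Bigr).$$
For $(u,v)\in D_0$ we have $2-u\in(1,2)$, so $\tfrac{1}{2-u}\in(1/2,1)$, $\tfrac{v}{2-u}>0$, and $(1+v)/(2-u)<1$ because $u+v<1$; thus $F^{-1}(D_0)\subset D_0$. The Jacobian
$$DF^{-1}(u,v)=\begin{pmatrix}1/(2-u)^2 & 0\\ v/(2-u)^2 & 1/(2-u)\end{pmatrix}$$
has every entry bounded by $1$ in absolute value throughout $D_0$, so $F^{-1}$ is Lipschitz on $D_0$ with an absolute constant (independent of $H$).

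Finally, any $H\in\mathcal{H}$ is a word of finite length in $F$ and $G$, so $H^{-1}$ is the reversed word in $F^{-1}$ and $G$. The inclusion $H^{-1}(D_0)\subset D_0$ follows by iterating the two preservation statements above, and the chain rule gives that $H^{-1}$ is Lipschitz with constant at most the product of the (finitely many) individual Lipschitz constants. There is no real obstacle here; the only point to be careful about is that one must invoke the preservation $F^{-1}(D_0)\subset D_0$ at every intermediate stage so that the pointwise Jacobian bound can be applied along the full iteration.
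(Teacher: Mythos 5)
Your proof is correct and follows essentially the same route as the paper's: compute $F^{-1}$ and $G^{-1}$ explicitly, check that each preserves $D_0$, bound the partial derivatives by $1$ on $D_0$, and conclude for general $H^{-1}$ by composing. The only differences are cosmetic (you note that $G$ is an involution and you make the intermediate-stage preservation explicit, which the paper leaves implicit).
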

\begin{proof}
It is enough to show this for the maps $F$ and $G$. By definition $D_0$ is the region bounded by the lines $y = 0$, $x = 0$, and $x + y = 1$. The inverse of $F$ and $G$ can be computed as
$$
F^{-1}(x_1,y_2)=\left( \frac{1}{2-x_1}, \frac{y_1}{2-x_1} \right),
$$
and
$$
G^{-1}(x_1,y_2)=\left( 1 - x_1 - y_2 , y_2 \right).
$$
If $x_1 + y_1 \leq 1$, then considering the two coordinates of $F^{-1}$ we have
$$
\frac{1}{2-x_1} + \frac{y_1}{2-x_1} \leq \frac{1 + 1 - x_1}{2-x_1}=1
$$
as $y_1 \leq 1 - x_1$. Hence
$$
\left( \frac{1}{2-x_1}, \frac{y_1}{2-x_1} \right) \in D_0.
$$
Similarly for $G^{-1}$, if $x_1 + y_1 \leq 1$
$$
1 - x_1 - y_1 + y_1 = 1 - x_1 \leq 1,
$$
hence
$$
\left( 1 - x_1 - y_1 , y_1 \right) \in D_0.
$$
To prove that they are Lipschitz it is sufficiently to show, that the partial derivatives are uniformly bounded in $D_0$.
$$
\frac{\partial}{\partial x_1}\left(\frac{1}{2-x_1}\right)=\frac{1}{(2-x_1)^2}\leq 1,
$$
$$
\frac{\partial}{\partial y_1}\left(\frac{1}{2-x_1}\right)=0,
$$
$$
\frac{\partial}{\partial y_1}\left(\frac{y_1}{2-x_1}  \right)= \frac{1}{2-x_1}\leq 1,
$$
$$
\frac{\partial}{\partial x_1}\left(\frac{y_1}{2-x_1}  \right)= \frac{y_1}{(2-x_1)^2}\leq 1,
$$
as $x_1,y_1 \leq 1$. Since for any $H \in \mathcal{H}$, $H^{-1}$ is a composition of Lipschitz functions, i.e.
$$
H^{-1}(x,y)= G^{-t} \circ F^{-l_0} \circ G^{-1} \circ F^{-l_1} \circ G^{-1} ... \circ G^{-1} \circ F^{-l_k} \circ G^{-s}(x,y)
$$
then $H^{-1}$ is also Lipschitz.
\end{proof}
Recall the following definition from Theorem \ref{mainth}
$$
 \mathcal{P}_0=\{(\alpha,\beta )\in D_0: \hbox{which satisfy }\eqref{bou-mth}, \hbox{ for all } k \geq k_{\alpha,\beta} \hbox{ and some } s \geq 1 \}.
$$
Define also
\begin{equation}
\mathcal{P}=\{(\alpha,\beta )\in D:  \hbox{which satisfy }\eqref{bou-mth}, \hbox{ for all } k \geq k_{\alpha,\beta} \hbox{ and some } s \geq 1 \}.
\end{equation}
From the discussion at the beginning of Section 2 and the definition of the sequence $\{n_k, m_k,\epsilon_{k+1}\}_{k=1}^\infty$ we have, that
\begin{equation}\label{c11}
\mathcal{P}_0 \subset \bigcup_{H \in \mathcal{H}} H^{-1}(\mathcal{P}).
\end{equation}

\begin{corollary}\label{maincor}
Assume $\dim_H{\mathcal{P}}\leq d$. Then
$$
\dim_H\mathcal{P}_0\leq d.
$$
\end{corollary}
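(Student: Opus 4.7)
The plan is to combine the inclusion \eqref{c11} with the Lipschitz estimate from Proposition \ref{p1} and the standard behavior of Hausdorff dimension under Lipschitz maps and countable unions. All the substantive work has been done in the preceding propositions, so the corollary will follow by assembling these ingredients in the correct order.

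First, I would recall from \eqref{c11} that
\[
\mathcal{P}_0 \subset \bigcup_{H \in \mathcal{H}} H^{-1}(\mathcal{P}),
\]
and observe that $\mathcal{H}$ is countable, as noted immediately after its definition in Section \ref{sec:2}. Thus $\mathcal{P}_0$ is contained in a countable union of sets of the form $H^{-1}(\mathcal{P})$.

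Next, I would fix an arbitrary $H \in \mathcal{H}$ and apply Proposition \ref{p1}, which shows that $H^{-1}$ is a Lipschitz map from $D_0$ into $D_0$. Since $\mathcal{P} \subset D$, Theorem \ref{h11} applies directly to yield $\dim_H(H^{-1}(\mathcal{P})) \leq \dim_H(\mathcal{P}) \leq d$.

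Finally, I would invoke Theorem \ref{thm2} on the countable family $\{H^{-1}(\mathcal{P}) : H \in \mathcal{H}\}$ to conclude
\[
\dim_H\!\Bigl( \bigcup_{H \in \mathcal{H}} H^{-1}(\mathcal{P}) \Bigr) = \sup_{H \in \mathcal{H}} \dim_H(H^{-1}(\mathcal{P})) \leq d,
\]
and then use monotonicity of Hausdorff dimension on subsets together with \eqref{c11} to deduce $\dim_H \mathcal{P}_0 \leq d$. There is no genuine obstacle in this argument; the content of the corollary is precisely that the three ingredients \eqref{c11}, Proposition \ref{p1}, and the countability of $\mathcal{H}$ fit together to reduce the dimension computation for $\mathcal{P}_0$ on all of $D_0$ to the dimension computation for $\mathcal{P}$ on the distinguished triangle $D$.
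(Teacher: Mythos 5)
Your argument is correct and is essentially identical to the paper's own proof: both combine the inclusion \eqref{c11}, the Lipschitz property of $H^{-1}$ from Proposition \ref{p1} together with Theorem \ref{h11}, and the countable stability of Hausdorff dimension from Theorem \ref{thm2}. No gaps.
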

\begin{proof}
According to Proposition \ref{p1}, for any $H \in \mathcal{H}$, $H^{-1}$ is Lipschitz. Therefore from Theorem \ref{h11} it follows, that
$$
\dim_H H^{-1}(\mathcal{P}) \leq \dim_H(\mathcal{P}).
$$
From this, Theorem \ref{thm2} and \eqref{c11} we will have
$$
\dim_H\mathcal{P}_0\leq\dim_H \left(\bigcup_{H \in \mathcal{H}} H^{-1}(\mathcal{P}) \right)\leq d.
$$
\end{proof}
Therefore, to estimate the Hausdorff dimension of $\mathcal{P}_0$, it is enough to estimate it for $\mathcal{P}$, i.e. when $(\alpha,\beta)\in D$ and for this $(\alpha,\beta)$'s one has the following relation
$$
\frac{1 - \alpha}{1 + \beta} = \cfrac{1}{2 + \cfrac{1}{m_1  + n_1 - \cfrac{\epsilon_2}{m_2 + n_2 - \cfrac{\epsilon_3}{m_3 + n_3 - \ddots\,}}}}.
$$

\bigskip
We now recall the definition of standard continued fractions. For any $\theta\in [0,1]$ its continued fraction is an expression of the form
$$
\theta = a_0 + \cfrac{1}{a_1 + \cfrac{1}{a_2 + \cfrac{1}{a_3 + \ddots }}},
$$
and its $n$'th convergent is denoted by
$$
\frac{p_n}{q_n}=\cfrac{1}{a_1 + \cfrac{1}{a_2  + \ddots\, + \cfrac{1}{a_n }}}.
$$
With the conventions $p_{-1} = 1$, $q_{-1} =0$, $p_0 = 0$, $q_0 = 1$, we have
$$
p_{n+1} = a_{n+1} p_n + p_{n-1}, q_{n+1} = a_{n+1} q_n + q_{n-1}, (n \geq 0),
$$
and
\begin{equation}\label{c1}
p_{n+1}q_{n}-p_nq_{n+1}=(-1)^n,\quad (n \geq -1).
\end{equation}

Let $\mathcal{I}^n = \mathcal{I}^n_{a_1,a_2,..,a_n}$, where $a_1,a_2,..,a_n$ are positive integers, be the interval
$$
\left(\cfrac{1}{a_1 + \cfrac{1}{a_2  + \ddots\, + \cfrac{1}{a_n }}} , \cfrac{1}{a_1 + \cfrac{1}{a_2  + \ddots\, + \cfrac{1}{a_n + 1}}} \right),
$$
that is
\begin{equation}
\left(\frac{p_n}{q_n}, \frac{p_n + p_{n+1}}{q_n + q_{n+1}} \right).
\end{equation}
Here, for $a \neq b$, we mean by $(a, b)$ the closed interval with end-points $a, b$. This means, that we can also have $b<a$.
From \eqref{c1} it follows that the length $|\mathcal{I}^n|$ of $\mathcal{I}^n$ satisfies
\begin{equation}\label{c2}
|\mathcal{I}^n|=\frac{1}{q_n(q_n + q_{n-1})}.
\end{equation}
We will also work with more general kind of continued fractions, namely semi-regular continued fractions (SRCF). For $\theta \in [0,1]$ its SRCF expansion looks like this
\begin{equation}\label{f1}
\theta = \cfrac{1}{a_1 + \cfrac{\epsilon_1}{a_2 + \cfrac{\epsilon_2}{a_3 + \cfrac{\epsilon_3}{a_3 + \ddots\,}}}},
\end{equation}
where $\epsilon_k=\pm1$ and $a_k \geq 2$ for all $k \geq 1$.
For short we will write \eqref{f1} in the following way
$$
\theta=[\epsilon_1/a_1, \epsilon_2/a_2,\dots,\epsilon_n/a_n,\dots].
$$  
Note, that if $\epsilon_k = 1$ for all $k \geq 1$, then we get the standard continued fraction expansion of $\theta$. The SRCF expansion is defined for $a_k\geq 2$, but we will also deal with the cases, when $a_k=1$ or $a_k=0$.

The following identity will be fundamental for us (see e.g. \cite{Kra-Iosif}). For $a \in \mathbb{Z}$,  $b \in N_+$ and $x \in [0,1)$ we have 
\begin{equation}\label{id}
a + \cfrac{-1}{b + x}=a - 1 + \cfrac{1}{1 + \cfrac{1}{b - 1  + x }}.
\end{equation}
Using this identity we are going to find the standard continued fraction expansion from their SRCF expansion.
According to Proposition 1, if $(\alpha,\beta)\in D$, then
$$
A({\alpha},{\beta}) = \frac{1 - \alpha}{1 + \beta} = \cfrac{1}{2 + \cfrac{1}{m_1  + n_1 - \cfrac{\epsilon_2}{m_2 + n_2 - \cfrac{\epsilon_3}{m_3 + n_3 - \ddots\,}}}}.
$$
In other words
\begin{equation}\label{b:1}
\frac{1 - \alpha}{1 + \beta}= [1/2,-\epsilon_2/(m_1 + n_1),\dots, -\epsilon_{k+1}/(m_k + n_k), \dots ]. 
\end{equation}
To not carry the minus sign in \eqref{b:1} all the time in the computations we will simply replace the $-\epsilon_k$ with $\epsilon_k$. So from now on \eqref{b:1} will look like this
\begin{equation}\label{b:2}
\frac{1 - \alpha}{1 + \beta}= [1/2,\epsilon_2/(m_1 + n_1),\dots, \epsilon_{k+1}/(m_k + n_k), \dots ],
\end{equation}
where $\epsilon_k=\pm 1$.
Now we will use the identity \eqref{id} to get rid of the negative $\epsilon_k$'s. If in \eqref{b:2} $\epsilon_{k+1}=-1$ for some $k$, then from \eqref{id}
\begin{equation}\label{b:4}
m_k + n_k + \cfrac{-1}{m_{k + 1} + n_{k + 1} + \cfrac{\epsilon_{k+2}}{x}} = m_k + n_k - 1 + \cfrac{1}{1 + \cfrac{1}{m_{k + 1} + n_{k + 1} -1  + \cfrac{\epsilon_{k+2}}{x} }},
\end{equation}
or
\begin{equation}\label{f2}
\begin{aligned}
 &\frac{1 - \alpha}{1 + \beta}=[\dots, -1/(m_{k} + n_{k}), \epsilon_{k+2}/(m_{k+1} + n_{k+1}), \dots ] \\
 &=[\dots , 1/(m_{k} + n_{k}-1),1/1 ,\epsilon_{k+2}/(m_{k+1} + n_{k+1}-1), \dots ]. \\
\end{aligned}
\end{equation}
By definition $m_k,n_k \geq 1$ and hence $m_k + n_k \geq 2$.
As we see from the equation above, any number $(n_k+m_k)$ can participate in at most two replacements, hence can be reduced by at most $2$, i.e. become $n_k+m_k-2$. This will be the case with $m_{k + 1} + n_{k + 1} -1$ in \eqref{b:4} if $\epsilon_{k+2}=-1$. If $m_{k + 1} + n_{k + 1} - 2>0$, then the replacement will be valid. The case $m_{k + 1} + n_{k + 1}= 2$ needs special considerations. From the left hand side in \eqref{b:4}

\begin{equation}\label{b:3}
m_{k + 1} + n_{k + 1} - 1 + \cfrac{-1}{m_{k + 2} + n_{k + 2} + \cfrac{\epsilon_{k+3}}{x_1}} = m_{k + 1} + n_{k + 1} - 2 + \cfrac{1}{1 + \cfrac{1}{m_{k + 2} + n_{k + 2} -1  + \cfrac{\epsilon_{k+3}}{x_1} }},
\end{equation}
and since $m_{k+1} + n_{k+1} = 2$, then
$$
m_{k + 1} + n_{k + 1} - 1 + \cfrac{-1}{m_{k + 2} + n_{k + 2} + \cfrac{\epsilon_{k+3}}{x_1}} = 
\cfrac{1}{1 + \cfrac{1}{m_{k + 2} + n_{k + 2} -1  + \cfrac{\epsilon_{k+3}}{x_1}}}.
$$
Putting this back into \eqref{b:4} we get
\begin{alignat*}{2}\label{h12}
m_k + n_k + \cfrac{-1}{m_{k + 1} + n_{k + 1} + \cfrac{\epsilon_{k+2}}{x}}  & =  && m_k + n_k - 1 + \cfrac{1}{1 + \cfrac{1}{
\cfrac{1}{1 + \cfrac{1}{m_{k + 2} + n_{k + 2} -1  + \cfrac{\epsilon_{k+3}}{x_1}}}}}\\
& = && m_k + n_k - 1 + \cfrac{1}{2 + \cfrac{1}{m_{k + 2} + n_{k + 2} -1  + \cfrac{\epsilon_{k+3}}{x_1}}}.\\
\end{alignat*}
In a similar way, if we have $m_{k + s} + n_{k + s}=2$, $\epsilon_{k + l + 1}=-1$ for all $s=1,\dots,l$, and either $m_{k + l+1} + n_{k + l+1}>2$ or $\epsilon_{k + s + 2}=1$ and then one can show, that the equality above, can be rewritten as follows
\begin{equation}\label{f:2}
\begin{aligned}
 &[\dots,-1/(m_{k} + n_{k}),-1/2,\dots,-1/2 ,{\epsilon_{k+l+2}}/(m_{k+l+1} + n_{k+l+1}), \dots ] \\
 &=[\dots,1/(m_{k} + n_{k}-1),1/(l+1),{\epsilon_{k+l+2}}/(m_{k+l+1} + n_{k+l+1}-1), \dots ]. \\
\end{aligned}
\end{equation}
Observe, that according to \eqref{bou-mth}, we should have $l < k_{\alpha,\beta}$.
We see that as a result of this procedure we will get the continued fraction expansion of $(1 - \alpha)/(1 + \beta)$. 

The next proposition shows, that we have a \eqref{bou} like property also for the standard continued fractions of $(1 - \alpha)/(1 + \beta)$:

\begin{proposition}\label{pro}
Let $(\alpha,\beta)\in \mathcal{P}$ and 
\begin{equation}
\frac{1 - \alpha}{1 + \beta}= [1/2,-\epsilon_2/(m_1 + n_1),\dots, -\epsilon_{k+1}/(m_k + n_k), \dots ],
\end{equation}
then for the standard continued fractions of $(1 - \alpha)/(1 + \beta)$, i.e. $(1 - \alpha)/(1 + \beta)=[a_1,a_2,\dots]$ there is a number $C_{\alpha,\beta}\in \mathbb{N}$ so that for any large $n$ there is a number $s_{\alpha,\beta} \in \mathbb{N}$, with $s_{\alpha,\beta}\leq C_{\alpha,\beta}$, such that
$$
(a_na_{n-1}\cdots a_{n-s_{\alpha,\beta}})^\frac{1}{s_{\alpha,\beta}}\geq \Lambda,
$$
where $\Lambda=({2C_0/3})^{1/2}$.
\end{proposition}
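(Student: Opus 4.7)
The plan is to convert the hypothesis \eqref{bou-mth} on the SRCF window product $\prod_{j=k-s+1}^k (m_j+n_j) \geq (2C_0)^s$ into a window bound for the standard continued fraction coefficients $a_n$, by tracking how the rewriting identity \eqref{id} (applied through \eqref{f2} and \eqref{f:2}) modifies each partial denominator and inserts $1$'s. The basic observation is that each SRCF position either produces a single reduced standard CF coefficient possibly accompanied by an adjacent inserted~$1$, or (in the collapse case) participates in a run of $2$'s that collectively produces a single standard CF coefficient; in particular, the number of standard CF coefficients arising from $s$ consecutive SRCF positions stays bounded by $2s+O(1)$.

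First I establish a pointwise ratio bound. For a non-collapsed SRCF position, the discussion preceding the proposition shows that $(m_j+n_j)$ is reduced by at most $2$, yielding a standard CF coefficient $b_j$ with
$$ b_j \;\geq\; \frac{m_j+n_j}{3}, $$
by a short case check: if $m_j+n_j\geq 3$ then $b_j\geq m_j+n_j-2\geq (m_j+n_j)/3$; if $m_j+n_j=2$ the only non-collapsed reductions give $b_j\in\{1,2\}$, both $\geq 2/3$. For a collapsed run of $l$ consecutive $2$'s (formula \eqref{f:2}), the $l$ SRCF terms of product $2^l$ are replaced by a single standard CF coefficient of value $l+1$, and the elementary inequality $(l+1)(3/2)^l\geq 1$ gives $(l+1)/2^l\geq 1/3^l$, so the $1/3$-per-position ratio is preserved through collapses as well.

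Given a large index $n$, I align by locating the unique SRCF position $k$ such that $a_n$ is the last standard CF coefficient produced from SRCF positions $\leq k$, and I set $s_{\alpha,\beta}$ to be the number of standard CF coefficients arising from SRCF positions $k-s+1,\dots,k$, so that $s_{\alpha,\beta}\leq C_{\alpha,\beta}:=2s+O(1)$. Since inserted $1$'s contribute trivially to a product, combining the per-position ratio with \eqref{bou-mth} gives
$$\prod_{j=n-s_{\alpha,\beta}+1}^{n} a_j \;\geq\; \frac{1}{3^{s}}\prod_{j=k-s+1}^{k}(m_j+n_j) \;\geq\; \left(\frac{2C_0}{3}\right)^{\!s} \;=\; \Lambda^{2s} \;\geq\; \Lambda^{s_{\alpha,\beta}},$$
where the last step uses $s_{\alpha,\beta}\leq 2s$ together with $\Lambda>1$. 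Taking $s_{\alpha,\beta}$-th roots yields the claim.

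The main technical nuisance is the boundary bookkeeping — ensuring that the block of standard CF coefficients obtained from exactly $s$ consecutive SRCF positions aligns cleanly with a terminal window $a_n,a_{n-1},\dots$ and that no collapse of $2$'s straddles the endpoints of the window. Both issues are handled by allowing $s_{\alpha,\beta}$ to fluctuate in a bounded range around $[s,2s]$, which is absorbed into the constant $C_{\alpha,\beta}$ and enforced by taking $n$ (equivalently $k$) large enough with respect to $k_{\alpha,\beta}$.
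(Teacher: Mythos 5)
Your proposal is correct and follows essentially the same route as the paper: both convert the window bound \eqref{bou-mth} into a product bound on the standard continued fraction digits by tracking how the identity \eqref{id} reduces each partial denominator by at most $2$ and inserts $1$'s, losing a factor of $3^s$ overall and using $r\leq 2s$ to arrive at $\Lambda=(2C_0/3)^{1/2}$. The only cosmetic difference is that the paper factors out the digits equal to $2$ or $3$ and applies the reduction only to digits $\geq 4$, whereas you prove a uniform per-position ratio of $1/3$ (including through collapsed runs of $2$'s); the computations are otherwise identical.
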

\begin{proof}
We know from \eqref{bou}, that
$$
(n_k+m_k)(n_{k-1}+m_{k-1})\cdots (n_{k-s+1}+m_{k-s+1})\geq (2C_0)^s.
$$
Let the number of $2$'s and $3$'s between the numbers $\{(m_j+n_j)\}_{j=k-s+1}^{k}$ be respectively equal to $m_1$ and $m_2$.
Therefore
$$
(n_k+m_k)(n_{k-1}+m_{k-1})\cdots (n_{k-s+1}+m_{k-s+1})= (n_{k_1} + m_{k_1})\cdots (n_{k_l} + m_{k_l}) 2^{m_1}3^{m_2},
$$
where
$$
l+m_1+m_2=s,
$$
and
$$
(n_{k_j}+m_{k_j})\geq 4,
$$
for $j=k_1,\dots, k_s$.
Therefore
$$
(n_{k_1} + m_{k_1})\cdots (n_{k_l} + m_{k_l}) \geq \frac{(2C_0)^s}{2^{m_1}3^{m_2}}.
$$
Assume, that the digits
\begin{equation}\label{pp}
\{(n_k+m_k),(n_{k-1}+m_{k-1}),\dots ,(n_{k-s+1}+m_{k-s+1})\}, 
\end{equation}
during the procedure described above have transformed into new $r$ many digits, i.e corresponding to the standard continued fractions
\begin{equation}\label{res}
\{a_i,a_{i-1},\dots ,a_{i-r+1}\}.
\end{equation}
In case we have digit 1's appearing on both sides of the continued fraction $(n_j+m_j)$, then in \eqref{res} we will include only the left digit 1.

In case $\epsilon_j=-1$ for all $j=k-s+1, \dots, k$, we will have a new digit $1$ appearing in between any two digits $(n_j+m_j)$ and $(n_{j-1}+m_{j-1})$. Therefore the number of digits in \eqref{pp} will at most double, i.e.
\begin{equation}\label{est:m}
r \leq 2s.
\end{equation}
And since each digit may participate in at most two replacements, then clearly
$$
a_ia_{i-1}\cdots a_{i-r}\geq (n_{k_1} + m_{k_1}-2)\cdots (n_{k_l} + m_{k_l}-2).
$$
Now note, that if $m_{k_j} + n_{k_j}\geq 4$, then
$$
m_{k_j} + n_{k_j}-2 \geq \frac{m_{k_j} + n_{k_j}}{2}.
$$ 

Therefore
\begin{alignat*}{2}\label{h12}
a_ia_{i-1}\cdots a_{i-r+1}  & \geq  && \frac{(m_{k_1}+n_{k_1})}{2} \cdots \frac{(m_{k_l}+n_{k_l})}{2}\\
& \geq && \frac{(2C_0)^s}{2^{l}\cdot 2^{m_1}\cdot 3^{m_2}}=\frac{(2C_0)^s}{2^{s-m_1-m_2}\cdot 2^{m_1}\cdot 3^{m_2}}\\
& = && \frac{(2C_0)^s}{2^{s-m_2}\cdot 3^{m_2}}\geq \frac{(2C_0)^s}{3^{s}}\\
& = && \left(\frac{2C_0}{3}\right)^s.\\
\end{alignat*}
Thus
$$
a_ia_{i-1}\cdots a_{i-r+1} \geq \left(\frac{2C_0}{3}\right)^s,
$$
or
$$
(a_ia_{i-1}\cdots a_{i-r+1})^{1/r} \geq \left(\frac{2C_0}{3}\right)^{s/r}.
$$
And since $r\leq 2s$, then 
$$
\frac{1}{2}\leq \frac{s}{r}.
$$
Therefore
$$
(a_ia_{i-1}\cdots a_{i-r+1})^{1/r} \geq \left(\frac{2C_0}{3}\right)^{1/2}.
$$
We can also see from \eqref{est:m} that $r$ is uniformly bounded, since
$$
r \leq 2s \leq 2k_{\alpha,\beta}.
$$
\end{proof}
By definition
$$
\Lambda = \left(\frac{2C_0}{3}\right)^{1/2},
$$
and since by assumption $C_0\geq 20$, then
\begin{equation}\label{estm:1}
\Lambda \geq 3.
\end{equation}
We recall, that our goal is to estimate the Hausdorff dimension of the set $\mathcal{P}$. Define
\begin{equation}\label{s:0}
S_0=A(\mathcal{P})=\{\theta \in [0,1]: \theta = A(\alpha,\beta), \hbox{ where } (\alpha,\beta)\in \mathcal{P}\},
\end{equation}
where the function $A$ is defined in \eqref{func}.
Notice, that the set
\begin{equation}\label{S}
\begin{aligned}
S & = \{\theta \in [0,1]:\exists k_\theta, s_\theta \geq 1, \hbox{ such that, if } k \geq k_\theta, \hbox{ then } (a_na_{n-1}\cdots a_{n+1-s_\theta})^{\frac{1}{s_\theta}}\geq \Lambda \}, \\
\end{aligned}
\end{equation}
from Proposition \ref{pro}, satisfies the inclusion
\begin{equation}\label{imp}
S_0 \subset S.
\end{equation}
It is not difficult to see, that for any $\theta \in S$
\begin{equation}\label{Kh:1}
\limsup_{n \rightarrow \infty}(a_1a_2\cdots a_n)^{\frac{1}{n}}\geq  \Lambda.
\end{equation}
Alternatively
\begin{equation}\label{Kh:2}
\limsup_{n \rightarrow \infty}\frac{1}{n}\sum_{k=1}^n \log a_k \geq  \log \Lambda.
\end{equation}
 One can now see from Khinchine's theorem \cite{Kh}, that for sufficiently large $\Lambda$ the Lebesgue measure of the set $S$ is zero. Indeed, according to Khintchine's theorem
$$
\lim_{n \rightarrow \infty} (a_1a_2\cdots a_n)^{\frac{1}{n}}=K_0, \hbox{ for almost all } \theta \in [0,1],
$$ 
where
$$
K_0\approx 2.68545.
$$
Clearly, in view of \eqref{estm:1} for any $\theta \in S$
\begin{equation}\label{khin}
\limsup_{n \rightarrow \infty} (a_1a_2\cdots a_n)^{\frac{1}{n}}\geq \Lambda\geq 3 > K_0.
\end{equation}
Therefore the Lebesgue measure of the set $S$ is zero. From this one can see, that the two dimensional Lebesgue measure of $(\alpha,\beta)\in D_0$, for which $A(\alpha,\beta)\in S$, is also zero. However this will also follow from Corollary \ref{cor1}.

In \cite{Fan} the authors, alongside with other things, for each $\gamma>0$, compute the Hausdorff dimension of the set of all $x \in [0,1]$, for which the following limits exists and equals
$$
\lim_{n \rightarrow \infty}\frac{1}{n}\sum_{j=1}^n\log a_j(x)=\gamma,
$$
or equivalently, if
$$
\lim_{n \rightarrow \infty}(a_1(x)a_2(x)\cdots a_n(x))^{\frac{1}{n}}=e^\gamma.
$$
In this paper we need to estimate the Hausdorff dimension of the set $S_0$, where the continued fractions, in particular, satisfy the property \eqref{khin}. Therefore we need, in a sense, stronger result. We will show in the sequel, that the method used in \cite{Fan} will allow to achieve this.

\medskip

\begin{proposition}\label{prop:3}
For $C_0\geq 20$, the Hausdorff dimension of the set $S_0$ satisfies the following bounds
\begin{equation}\label{fin1}
\frac{1}{2} + \frac{1}{2\log(2C_0+2)}\leq \dim_H S_0 \leq t(\log\Lambda),
\end{equation}
where the function $t(\zeta)$ is defined in Theorem \ref{t-func}.
\end{proposition}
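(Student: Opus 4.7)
The two bounds require different techniques, so I would treat them separately. For the upper bound the plan is to push $S_0$ into a Lyapunov level set and apply (a variant of) the multifractal analysis of \cite{Fan}, while for the lower bound I would exhibit an explicit Cantor-type subset of $S_0$ whose dimension can be estimated via Bowen's pressure formula for the restricted Gauss system.

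For the upper bound, Proposition \ref{pro} already delivers $S_0 \subset S$ via \eqref{imp}. Fixing $\theta \in S$ with fixed window length $s_\theta$, I would sum the bound $\log a_n + \log a_{n-1} + \cdots + \log a_{n+1-s_\theta} \geq s_\theta \log\Lambda$ over $n = k_\theta, \ldots, N$: since each $\log a_j$ appears at most $s_\theta$ times on the left, one gets $\sum_{j=1}^N \log a_j \geq (N - k_\theta)\log\Lambda - C(\theta)$, forcing $\liminf_N \frac{1}{N}\sum_{k=1}^N \log a_k \geq \log\Lambda$. Hence $S$ is contained in
\[
L_\Lambda := \bigl\{\theta \in [0,1] : \liminf_{n\to\infty} \tfrac{1}{n}\sum_{k=1}^n \log a_k(\theta) \geq \log\Lambda \bigr\}.
\]
Since $\Lambda \geq 3 > K_0$ we sit on the decreasing branch of the spectrum $t$, so the upper bound reduces to $\dim_H L_\Lambda \leq t(\log\Lambda)$. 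The principal obstacle, and the step requiring the bulk of the adaptation of \cite{Fan}, is that their theorem is phrased for level sets on which the limit actually exists, whereas $L_\Lambda$ only controls the liminf. To bridge this I would, for each $\varepsilon > 0$, write $L_\Lambda = \bigcup_{N\geq 1}\{\theta : \tfrac{1}{n}\sum_{k\leq n} \log a_k \geq \log\Lambda - \varepsilon \text{ for all } n \geq N\}$, cover each piece by the cylinders $\mathcal{I}^n_{a_1,\ldots,a_n}$ satisfying $a_1\cdots a_n \geq e^{n(\log\Lambda - \varepsilon)}$, and bound the resulting $s$-Hausdorff content using $|\mathcal{I}^n| \asymp 1/(a_1\cdots a_n)^2$ together with the variational principle and pressure estimate from \cite{Fan}; letting $\varepsilon \to 0$ and using continuity of $t$ at $\log\Lambda$ closes the upper bound.

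For the lower bound I would exhibit an explicit subset of $S_0$. Set $D = 2C_0 + 2$ and define
\[
F = \bigl\{\theta \in (1/3, 1/2) : a_1(\theta) = 2,\ a_k(\theta) \geq D\ \text{for every}\ k\geq 2\bigr\}.
\]
Given $\theta = [1/2, 1/a_2, 1/a_3, \ldots] \in F$ I would realize $\theta = A(\alpha,\beta)$ with $(\alpha,\beta) \in \mathcal{P}$ by taking $\epsilon_{k+1} = -1$ (in the original sign convention of Proposition \ref{propos}) for every $k \geq 1$ and splitting $m_k + n_k = a_{k+1}$ as, say, $n_k = a_{k+1} - 1$, $m_k = 1$; after the sign flip of \eqref{b:2} the SRCF of $A(\alpha,\beta)$ then coincides with the standard continued fraction of $\theta$, so $A(\alpha,\beta) = \theta$. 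Condition \eqref{bou-mth} holds with $s = 1$ since $m_k + n_k = a_{k+1} \geq D > 2C_0$, and the Keane condition is immediate because $n_k \geq D - 1 \geq 2$, so $F \subset S_0$. To estimate $\dim_H F$ I would invoke Bowen's formula for the infinite conformal IFS $\{T_a(x) = 1/(a+x) : a \geq D\}$: the dimension equals the unique $s_\ast$ at which the Gauss pressure vanishes, and since that pressure is decreasing in $s$ it suffices to verify $\sum_{a\geq D} a^{-2s} \geq 1$ at the candidate $s = 1/2 + 1/(2\log D)$. By the integral test,
\[
\sum_{a\geq D} a^{-1-1/\log D} \;\geq\; \int_D^\infty x^{-1-1/\log D}\,dx \;=\; \log D \cdot D^{-1/\log D} \;=\; \frac{\log D}{e},
\]
which exceeds $1$ whenever $\log D \geq e$; for $C_0 \geq 20$ one has $\log D \geq \log 42 > e$, giving $\dim_H F \geq 1/2 + 1/(2\log(2C_0+2))$ and completing the lower bound.
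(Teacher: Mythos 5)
Your proposal is correct in outline but follows a genuinely different route from the paper on both halves, and one step of it still needs to be supplied. For the upper bound the paper never extracts your liminf statement: it keeps only the weaker limsup bound \eqref{Kh:2}, splits $S$ into the set $A_1$ where $\frac{1}{n}\sum\log a_k\to\infty$ (dimension $\frac12$ by Theorem \ref{t-func}) and the set $A_2$ of points admitting a subsequential Ces\`aro limit $\zeta_x\in[\log\Lambda,\infty)$, covers $[\log\Lambda,\infty)$ by countably many intervals $(\zeta_k-\epsilon_{\zeta_k},\zeta_k+\epsilon_{\zeta_k})$ via Heine--Borel, and applies the two-sided summability estimate \eqref{m:eq} of \cite{Fan} to each set $A(\zeta_k,\epsilon_{\zeta_k})$. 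Your reduction to the liminf set $L_\Lambda$ is a genuine strengthening of \eqref{Kh:2} (the overlapping-window summation is valid since every $\log a_j\geq 0$ and appears with multiplicity at most $s_\theta$), but the one-sided estimate $\sum_{a_1\cdots a_n\geq e^{n(\log\Lambda-\varepsilon)}}|\mathcal{I}^n|^\tau\to 0$ is \emph{not} literally in \cite{Fan}, whose \eqref{m:eq} is for two-sided level conditions; you would need to rerun their pressure/Chernoff argument with a one-sided constraint, which does work because $q(\zeta)>0$ on the decreasing branch $\zeta>\zeta_0$, but this is the one place where your plan is a plan rather than a proof --- the paper's $A_1\cup A_2$ decomposition buys the ability to quote \cite{Fan} verbatim. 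For the lower bound the paper just asserts the inclusion \eqref{lower} and cites Theorem 2 of \cite{Good}; your version is the same inclusion made explicit (the sign bookkeeping $\epsilon_{k+1}=-1$, $m_k+n_k=a_{k+1}$, $m_k=1$ is consistent with Propositions \ref{propos1} and \ref{propos}, and restricting to $a_1=2$ is actually more careful than \eqref{lower} as written, since every point of $S_0=A(\mathcal{P})$ lies in $[1/3,1/2]$), and your integral-test computation is a self-contained rederivation of Good's bound; the only refinement needed is that to force the pressure to be nonnegative from a one-block sum you should check $\sum_{a\geq D}(a+1)^{-2s}\geq 1$ (or invoke bounded distortion) rather than $\sum_{a\geq D}a^{-2s}\geq 1$, which still holds with room to spare since $\log(2C_0+2)\geq\log 42>e$.
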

\begin{proof}
We recall certain facts from \cite{Fan}.
Let
$$
D:=\{(t,q)\in \mathbb{R}:2t-q>1 \},
$$
$$
D_0:=\{(t,q)\in \mathbb{R}:2t-q>1, 0\leq t \leq 1 \}.
$$
For $(t,q)\in D$, define
$$
P(t,q)=\lim_{n \rightarrow \infty} \frac{1}{n}\log \sum_{\omega_1=1}^{\infty}\cdots\sum_{\omega_n=1}^{\infty}\exp\left( \sup_{x \in [0,1]}\log \prod_{j=1}^n \omega_j^q(\omega_j,\dots,\omega_n+x])^{2t} \right).
$$
It is shown in \cite{Fan}, that $P(t,q)$ is an analytic function in $D$. Moreover, for any $\zeta\geq 0$, there exists a unique solution $(t(\zeta),q(\zeta))\in D_0$ to the equation
 
$$
\begin{cases} 
P(t,q)=q\zeta, \\
\frac{\partial P}{\partial q}(t,q)=\zeta.\\
\end{cases}
$$

In \cite{Fan} the authors study the Khintchine exponents and the Lyapunov exponents, that is for $x\in [0,1]$ the numbers
$$
\gamma(x):=\lim_{n \rightarrow \infty}\frac{1}{n}\sum_{j=1}^n\log a_j(x)=\lim_{n \rightarrow \infty}\frac{1}{n}\sum_{j=0}^{n-1}\log a_1(T^j(x)),
$$
$$
\lambda(x):=\lim_{n \rightarrow \infty}\frac{1}{n}\log|(T^n)'(x)|=\lim_{n \rightarrow \infty}\frac{1}{n}\sum_{j=0}^{n-1}\log|T'(T^j(x))|,
$$
if they exist. In the above formulas $T$ is the Gauss map, i.e.
$$
T(x)=\frac{1}{x}-\left[\frac{1}{x}\right],
$$
which is known to preserve the measure
$$
d\mu_G=\frac{dx}{(1+x)\log2}.
$$
From Birkhoff's ergodic theorem we have, that
\begin{equation}\label{cons}
\zeta_0=\log K_0=\int_0^1 \log a_1(x)d\mu_G.
\end{equation}

For real numbers $\zeta,\beta \geq 0$ one considers the level sets of Khintchine
exponents and Lyapunov exponents
$$
E_\zeta=\{x\in [0,1]: \gamma(x)=\zeta \},
$$
$$
F_\zeta=\{x\in [0,1]: \lambda(x)=\beta \}.
$$
The following theorem holds.

\begin{theorem}[\cite{Fan}]\label{t-func}
Let $\zeta_0$ be as in \eqref{cons}. For $\zeta \geq 0$, the set $E_\zeta$ is of Hausdorff
dimension $t(\zeta)$. Furthermore, the dimension function $t(\zeta)$ has the following properties:

$1)$
 $t(\zeta_0) = 1$ and $t(+\infty) = \frac{1}{2}$;
 
$2)$ $t'(\zeta)<0$ for all $\zeta>\zeta_0$, $t'(\zeta_0)=0$, and $t'(\zeta)>0$ for all $\zeta<\zeta_0$.

$3)$ $t'(0+)=\infty$ and $t'(+\infty)=0$

$4)$ $t''(\zeta_0)<0$ and $t''(\zeta_1)>0$ for some $\zeta_1>\zeta_0$, so $t(\zeta)$ is neither convex nor concave.

\end{theorem}

\begin{center}\label{fig}
\includegraphics[scale=0.5]{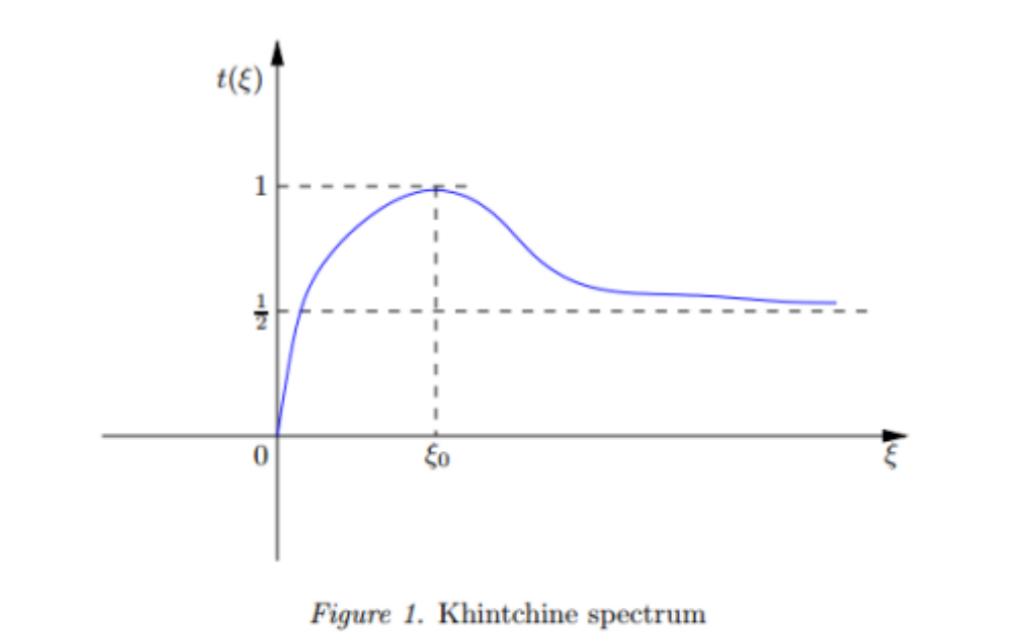}
\end{center}

Next we recall certain fact from pp. 100--101 in \cite{Fan}.

For any $\tau>t(\zeta)$ choose an $\epsilon=\epsilon(\tau)>0$ so, that
\begin{equation}\label{m:1}
0 < \epsilon < \frac{P(t(\zeta),q(\zeta)))-P(\tau,q(\zeta))}{q(\zeta)}, \hbox{ if } q(\zeta)>0,
\end{equation}
and
\begin{equation}\label{m:2}
0 < \epsilon < \frac{P(\tau,q(\zeta)))-P(t(\zeta),q(\zeta))}{q(\zeta)}, \hbox{ if } q(\zeta)<0.
\end{equation}
Such an $\epsilon$ exists, since $P(\tau,q)$ is strictly decreasing with respect to $\tau$, see pp. 100-101 in \cite{Fan}. Let $\mathcal{I}(n,\zeta,\epsilon)$ be the collection of all $n$-th order cylinders $\mathcal{I}_n(a_1,\dots, a_n)$, such that
$$
\zeta - \epsilon < \frac{1}{n}\sum_{j=1}^{n}\log a_j(x)<\zeta + \epsilon.
$$
Let
$$
E_{\zeta}^n(\epsilon)=\bigcup_{J \in \mathcal{I}(n,\zeta,\epsilon)}J.
$$
It is shown in \cite{Fan}, page 101, that
\begin{equation}\label{m:eq}
\sum_{n=1}^\infty \sum_{J \in \mathcal{I}(n,\zeta,\epsilon)}|J|^\tau < \infty.
\end{equation}
Now consider the set
\begin{equation}
A(\zeta,\epsilon)=\{x \in [0,1]: \frac{1}{n}\sum_{k=1}^{n} \log a_k(x) \in (\zeta-\epsilon, \zeta+\epsilon), \hbox{ for infinitely many }n \},
\end{equation}
or alternatively
$$
A(\zeta,\epsilon)= \bigcap_{n=1}^\infty \bigcup_{k=n}^{\infty}E_{\zeta}^k(\epsilon).
$$
From \eqref{m:eq} it follows, that
\begin{equation}\label{main:1}
\dim_H (A(\zeta,\epsilon))\leq\tau.
\end{equation}
One can also see, that if $\epsilon'<\epsilon$, then
\begin{equation}\label{mon}
\dim_H (A(\zeta,\epsilon'))<\dim_H (A(\zeta,\epsilon))\leq\tau.
\end{equation}
Now let $\zeta,\zeta_1$ be such, that
$$
\zeta_1 < \zeta < \infty
$$
and
$$
\log\Lambda>\zeta_1>\zeta_0.
$$
The choice of the number $\log\Lambda$ comes from \eqref{Kh:2}.
From the monotonicity of $t(\zeta)$ (see Theorem \ref{t-func} and Figure 1), for $\zeta > \zeta_0$ we will have
$$
t(\zeta) <t(\zeta_1).
$$
Therefore, for $\tau=t(\zeta_1)$, in view of \eqref{mon} one can choose $\epsilon$ in such a way, that
\begin{equation}\label{cov:1}
(\zeta - \epsilon, \zeta + \epsilon)\subset (\zeta_1,\infty),
\end{equation}
and \eqref{main:1} holds for $\tau$. Let
$$
\{ (\zeta-\epsilon_{\zeta}, \zeta+\epsilon_{\zeta}) \}_{\zeta > \zeta_1},
$$
be the collection of all these intervals, for $\zeta > \zeta_1$. Clearly
\begin{equation}\label{fum}
[\log\sqrt{\Lambda},\infty) \subset \bigcup_{\zeta > \zeta_1 }^\infty (\zeta-\epsilon_{\zeta}, \zeta+\epsilon_{\zeta}).
\end{equation}
By representing the closed halfinterval $[\log\Lambda,\infty)$ as a union of countably many closed intervals and using the Heine--Borel lemma, we can find a countable sub-family of intervals from \eqref{fum}
$$
\{ (\zeta_k-\epsilon_{\zeta_k}, \zeta_k+\epsilon_{\zeta_k}) \}_{k \geq 1},
$$
so that
\begin{equation}
[\log\Lambda,\infty) \subset \bigcup_{k=1}^\infty (\zeta_k-\epsilon_{\zeta_k}, \zeta_k+\epsilon_{\zeta_k}).
\end{equation}
Hence, in view of \eqref{main:1}, \eqref{cov:1}, we will have
\begin{equation}\label{s:5}
\dim_H \left(\bigcup_{k=1}^\infty A(\zeta_k, \epsilon_{\zeta_k}) \right) \leq \sup_{k \geq 1}\dim_H A(\zeta_k, \epsilon_{\zeta_k}) \leq \tau=t(\zeta_1).
\end{equation}
Consider now the following two sets $A_1$ and $A_2$:
$$
A_1=\{x:\lim_{n \rightarrow \infty} \frac{1}{n}\sum_{k=1}^n \log a_k(x) = \infty\},
$$
and
\begin{equation}
A_2=\{x: \exists n_s(x)\nearrow \infty,\lim_{n_s \rightarrow \infty} \frac{1}{n_s}\sum_{k=1}^{n_s} \log a_k(x) = \zeta_x, \hbox{ for some }\zeta_x \geq \log\Lambda \}.
\end{equation}
In follows from \eqref{Kh:2}, that
$$
S \subset A_1 \cup A_2.
$$
Since
$$
\zeta_x \in [\log\Lambda,\infty) \subset \bigcup_{k=1}^\infty (\zeta_k-\epsilon_{\zeta_k}, \zeta_k+\epsilon_{\zeta_k}),
$$
then
$$
A_2 \subset \bigcup_{k=1}^\infty A(\zeta_k, \epsilon_{\zeta_k}).
$$
Therefore, from \eqref{s:5}
\begin{equation}
\dim_H A_2 \leq t(\zeta_1).
\end{equation}
As for the set $A_1$, one has
$$
\dim_HA_1=t(\infty)=\frac{1}{2}<t(\zeta_1).
$$
Therefore
\begin{equation}
\dim_H S \leq \dim_H \left( A_1 \cup A_2 \right)\leq t(\zeta_1).
\end{equation}
But since $\zeta_1$ was an arbitrary number between $\log \Lambda$ and $\zeta_0$, and $t(\zeta)$ is a continuous function, then it follows
\begin{equation}\label{final}
\dim_H S \leq t(\log \Lambda).
\end{equation}
Recall the definitions of the sets $S_0$ and $S$, \eqref{S}, \eqref{s:0}. Our goal is to estimate the Hausdorff dimension of the set $S_0$. From \eqref{imp} we have
$$
\dim_H S_0 \leq \dim_H S \leq t(\log \Lambda). 
$$

\smallskip
To estimate the Hausdorff dimension of $S_0$ from below we notice, that
\begin{equation}\label{lower}
\{x\in[0,1]: a_k(x)\geq 2 C_0, \hbox{ for all } k\geq k_x\}\subset S_0.
\end{equation}
One gets this by considering the set of all $\{n_k,m_k,\epsilon_{k+1}\}_{k=1}^\infty$, where $\epsilon_{k+1}=1$, and $m_k,n_k\geq C_0$.
But according to Theorem 2 in \cite{Good}, the Hausdorff dimension of the set \eqref{lower} in the case $2C_0 \geq 20$, can be estimated from below as follows
$$
\frac{1}{2} + \frac{1}{2\log(2C_0+2)}\leq \dim_H S_0.
$$
Combining this with \eqref{final}, we get
\begin{equation}\label{fin1}
\frac{1}{2} + \frac{1}{2\log(2C_0+2)}\leq \dim_H S_0 \leq t(\log\Lambda).
\end{equation}
\end{proof}

\medskip

\section{Proof of main theorem}\label{sec:3}
\begin{proof}

We have
$$
A(\alpha,\beta)=\frac{1-\alpha}{1+\beta}.
$$
We want to estimate the Hausdorff dimension of the set $\mathcal{P}_0$. From \eqref{maincor} we have, that 
$$
\dim_H\mathcal{P}_0 \leq \dim_H\mathcal{P}.
$$
For this we refer to a standard fact from the theory of Hausdorff dimensions. If $F:G \subset \mathbb{R}^2\rightarrow \mathbb{R}$ is $F \in C^1$, $\triangledown F \neq 0$ for all $(\alpha,\beta)\in G$, then for any set $E \subset  \mathbb{R}$ one has
\begin{equation}\label{haus1}
\dim_HF^{-1}(E)=1 + \dim_HE.
\end{equation}
To verify this conditions for $A(\alpha,\beta)$ in $D$ we compute
$$
\frac{\partial}{\partial\alpha} A(\alpha,\beta)=-\frac{1}{1+\beta}
$$
and
$$
\frac{\partial}{\partial\beta} A(\alpha,\beta)=-\frac{1 - \alpha}{(1+\beta)^2}.
$$
We see, that $\triangledown F \neq 0$ in $D_0$. Hence, from formula \eqref{haus1}
$$
\dim_H (\mathcal{P}_0)=\dim_H(A^{-1}(S_0))=1 + \dim_H S_0.
$$
But then, from Proposition \ref{prop:3}
$$
\frac{3}{2} + \frac{1}{2\log(2C_0+2)} \leq \dim_H (\mathcal{P}_0) \leq 1 + t(\log\Lambda).
$$
Since, for $C_0 \geq 20$ we had $\log \Lambda > \log K_0 = \zeta_0$, then, in view of Theorem \ref{t-func}, we get
$$
t(\log\Lambda) < 1.
$$
Therefore
$$
\frac{3}{2}< \dim_H (\mathcal{P}_0) < 2.
$$
\end{proof}

\section*{Acknowledgements} The author would like to express his gratitude to Michael Benedicks for
his guidance and valuable suggestions and also to El Houcein El Abdalaoui for many useful discussions and comments on the manuscript.

\end{document}